\title
[Algebraic independence]
{On the Algebraic Independence of $E$- and $G$-Functions, II: An Effective Version}
\author{\firstname{Daniel} \middlename{} \lastname{vargas-Montoya}}
\urladdr{}
\thanks{}
\keywords{}
\subjclass{}
\begin{document}

\begin{abstract}
Let $K$ be a finite extension of $\mathbb{Q}_p$ that is totally ramified over $\mathbb{Q}_p$. The set $\mathcal{M}\mathcal{F}(K)$ consists of power series in $1+zK[[z]]$ that are solutions of differential operators in $K(z)[d/dz]$ equipped with strong Frobenius structure and satisfying maximal order multiplicty (MOM) condition at zero. It turns out that this set contains an interesting class of $E$- and $G$-functions. In this work, we provide a criterion for determining the algebraic independence, over the field of analytic elements, of elements belonging to $\mathcal{M}\mathcal{F}(K)$. As an illustration of this criterion, we show the algebraic independence of some $E$- and $G$-functions over the field of analytic elements.
\end{abstract}


\maketitle


\section{Introduction}
In~\cite{Siegel}, Siegel introduced the class of $E$\nobreakdash- and $G$\nobreakdash-functions. His main purpose in introducing these classes was to generalise the classical theorems of \emph{Hermite} and \emph{Lindemann-Weierstrass}. Recall that a power series $f(z)=\sum_{n\geq0}a_nz^n$ is a $G$\nobreakdash-\emph{function} if the coefficients $a_n$ are algebraic numbers and there is a real number $C>0$ such that:

1. the power series $f(z)$ is solution of a nonzero differential operator with coefficients in $\overline{\mathbb{Q}}(z)$;\smallskip

2.  the absolute values of  all Galois conjugates of $a_n$ are at most $C^{n+1}$ for all $n\geq0$;\smallskip

3. there is a sequence of positive integers $D_m$ such that, for all integers $m\geq0$, $|D_m|<C^{m+1}$ and, for all $n\leq m$, $D_ma_n$ is an algebraic integer.

Among the $G$\nobreakdash-functions, we have the \emph{hypergeometric series} $_{n}F_{n-1}$ with rational parameters, and \emph{diagonal of rational functions}.

Furthermore, a power series $F(z) = \sum_{n \geq 0} \frac{a_n}{n!} z^n$ is an $E$\nobreakdash-\emph{function} if and only if $f(z) = \sum_{n \geq 0} a_n z^n$ is a $G$\nobreakdash-function. In particular, this implies that if $F(z)$ is an $E$\nobreakdash-function, then it is a solution of a nonzero differential operator with coefficients in $\overline{\mathbb{Q}}(z)$. Classical examples of $E$\nobreakdash-functions include the exponential function, sine, cosine, hypergeometric series ${}_{p}F_p$ with rational parameters, and the Bessel function.

Siegel developed a method for proving the algebraic independence of values of $E$\nobreakdash-functions at algebraic points. He also suggested that his method could be applied to $G$-functions, and this was later accomplished by Chudnovsky~\cite{Chu} in 1984. These results have motivated the study of algebraic independence of $E$- and $G$-functions, and since then, numerous results have been established. For instance, in \cite{B88} and \cite{shi}, the authors established criteria for the algebraic independence of $E$-functions. In the case of $G$-functions, the works \cite{allouche} and \cite{transcedencia} proved the transcendence of certain $G$-functions that are \emph{$p$-Lucas} for infinitely many primes $p$, and recently, in \cite{ABD19}, a criterion for the algebraic independence of such $G$-functions was given.


The present work is motivated by a recent result of \cite{vargas6}, where we established a criterion for the algebraic independence of power series in $\mathcal{M}\mathcal{F}(K)$ with $K$ a Frobenius field. Recall that $K$ is a Frobenius field if $K$ is a finite extension of $\mathbb{Q}_p$ and, for all $x\in K$ such that $|x|\leq1$, we have $|x-x^p|<1$. Examples of Frobenius fields are given by finite extensions of $\mathbb{Q}_p$ that are totally ramified over $\mathbb{Q}_p$. In order to recall the definition of the set $\mathcal{M}\mathcal{F}(K)$, let us first review the definition of the field of \emph{analytic elements}. Let $\mathbb{C}_p$ be the completion of $\overline{\mathbb{Q}_p}$ with respect to the $p$-adic  norm and let $L$ be a complete extension of $\mathbb{Q}_p$ such that $L\subset\mathbb{C}_p$. The field $L(z)$ is equipped with the Gauss norm $$\left|\frac{P(z)}{Q(z)}\right|_{\mathcal{G}}=\frac{\max\{|a_i|\}_{1\leq i\leq n}}{\max\{|b_j|\}_{1\leq j\leq n}},$$
 where $P(z) = \sum a_i z^i$ and $Q(z) = \sum b_j z^j$ belong to $L(z)$. Further, for a power series $f(z)=\sum_{n\geq0}a_nz^n\in L[[z]]$ with $|a_n|\leq1$ for all $n\geq0$, we set $|f(z)|_{\mathcal{G}}=\sup_{n\geq0}|a_n|$.
 
The completion of $L(z)$ with respect to the Gauss norm is called the field of analytic elements and is denoted by $E_{L}$. When $L=\mathbb{C}_p$, the field $E_{\mathbb{C}_p}$ is denoted $E_p$. 

The set $\mathcal{M}\mathcal{F}(K)$  consists of power series $f(z)$ in $1+zK[[z]]$ such that $f(z)$ is solution of a differential operator $\mathcal{L}\in E_p[d/dz]$ which is MOM (maximal order multiplicity) at zero and has strong Frobenius structure.  For the precise definition of MOM and strong Frobenius structure, we refer the reader to \cite[Section 2.2]{vargas6}. Nevertheless, let us point out that if $\mathcal{L}\in\mathbb{Q}(z)[d/dz]$, being MOM at zero is equivalent to having \emph{maximal unipotent monodromy} (MUM) at zero. Moreover, the strong Frobenius structure can be viewed as a Frobenius action on the solution space of $\mathcal{L}$. As highlighted in \cite[Section~2.4]{vargas6}, $\mathcal{M}\mathcal{F}(K)$ contains an interesting class of $E$\nobreakdash- and $G$\nobreakdash-functions. For example, for any prime number $p>2$, the classical $G$\nobreakdash-functions $$\mathfrak{f}_r(z)=\sum_{n\geq0}\binom{2n}{n}^rz^n,\text{ }\quad\mathfrak{A}(z)=\sum_{n\geq0}\left(\sum_{k=0}^n\binom{n}{k}^2\binom{n+k}{k}^2\right)z^n$$
 belong to  $\mathcal{M}\mathcal{F}(\mathbb{Q}_p)$. Note that $\mathfrak{f}_r(z)$ is the hypergeometric series ${}_rF_{r-1}((1/2,\ldots,1/2),(1,\ldots,1), 4^rz)$ and $\mathfrak{A}(z)$ is the Apéry power series. Also, classical examples of $E$\nobreakdash-functions, such as the exponential and Bessel functions, given respectively by $$\exp(\pi_pz)=\sum_{n\geq0}\frac{\pi_p^n}{n!}z^n,\text{}\quad J_0(\pi_pz)=\sum_{n\geq0}\frac{(-1)^n\pi_p^{2n}}{4^n(n!)^2}z^{2n},$$
 belong to $\mathcal{M}\mathcal{F}(K_p)$, where $K_p=\mathbb{Q}_p(\pi_p)$ and $\pi_p^{p-1}=-p$.

Further, inside $E_L$, we have the ring $E_{0,L}$, which is the completion of $L_0(z)$, the ring of rational functions $A(z)/B(z)\in L(z)$ such that $B(x)\neq0$ for all $x\in D_0$, where $D_0=\{x\in\mathbb{C}_p: |x|<1\}$. The ring $E_{0,\mathbb{C}_p}$ will be denoted by $E_{0,p}$. Note that $E_{0,L}\subset L[[z]]$. 

By combining differential Galois theory and $p$-adic differential equations, we established in \cite{vargas6} the following result.
 
 \begin{theo}[Theorems~2.3 and 2.4 of \cite{vargas6}]\label{theo_alg_ind}
 Let $K$ be a Frobenius field and let $f_1(z),\ldots, f_m(z)\in\mathcal{M}\mathcal{F}(K)$. Then:
 \begin{enumerate}
 \item  $f_1(z),\ldots, f_m(z)$ are algebraically dependent over $E_{K}$ if and only if there exist integers $a_1,\ldots, a_m\in\mathbb{Z}$, not all zero, such that $$f(z)^{a_1}_1\cdots f(z)^{a_m}_m\in E_{0,K};$$
 \item $f_1(z),\ldots, f_m(z)$ are algebraically dependent over $E_K$ if and only if, for any $(r_1,\ldots r_m)\in\mathbb{N}^m$, $f_1^{(r_1)},\ldots, f_m^{(r_m)}$ are algebraically dependent over $E_K$. 
 \end{enumerate}
 \end{theo}
 
Inspired by this result, the goal of the present work is to study the transcendence and algebraic independence over $E_K$ of $E$- and $G$-functions that lie in $\mathcal{M}\mathcal{F}(K)$. A first illustration of Theorem~\ref{theo_alg_ind} is given in Section~\ref{sec_trans}, where we prove a conjecture due to Christol~\cite[p.30]{C86}, asserting that $J_0(\pi_p z)$ is transcendental over $E_{K_p}$ for $p > 2$. It is important to note that $\exp(\pi_pz)$ is algebraic over $E_{K_p}$ because $\exp(\pi_pz)^p\in E_{K_p}$. However, in order to address the question of algebraic independence, it becomes necessary to establish a general criterion for determining when, for power series $f_1(z), \ldots, f_m(z) \in \mathcal{M}\mathcal{F}(K)$, the expression $f_1(z)^{a_1} \cdots f_m(z)^{a_m} \notin E_K$ for any nonzero tuple $(a_1, \ldots, a_m) \in \mathbb{Z}^m$. The formulation of this criterion requires recalling the notions of \emph{poles} and \emph{residues} for elements in $E_{0,K}$, as well as the concepts of \emph{singular points} and \emph{exponents} for differential operators with coefficients in rings that are not necessarily integral domains. Therefore, we postpone the statement of this criterion to Section~\ref{sec_results}, and its proof is given in Section~\ref{sec_proof}.  As an illustration of this criterion, we prove in Section~\ref{sec_apli} the following result
 
  \begin{theo}\label{theo_appli}
Let $\pi_3$ be in $\overline{\mathbb{Q}}$ such that $\pi^2_3=-3$ and let $K=\mathbb{Q}_3(\pi_3)$. Let us consider the following power series $$J_0(z)=\sum_{n\geq0}\frac{(-1)^n}{4^n(n!)^2}z^{2n},\text{ }\quad \mathfrak{f}(z)=\sum_{n\geq0}\frac{-1}{(2n-1)64^n}\binom{2n}{n}^3z^n,$$ and $$\mathfrak{A}(z)=\sum_{n\geq0}\left(\sum_{k=0}^n\binom{n}{k}^2\binom{n+k}{k}^2\right)z^n.$$ Then, for all integers $r,s, k\geq0$, the power series $J_0^{(r)}(\pi_3z)$, $\mathfrak{A}^{(s)}(z)$, and $\mathfrak{f}^{(k)}(z)$ are algebraically independent over $E_{K}$.
\end{theo}
As a consequence of this theorem, we will prove in Corollary~\ref{coro_Q(z)} that, for all all integers $r,s, k\geq0$, the power series $\exp(\pi_3z)$, $J_0^{(r)}(\pi_3z)$, $\mathfrak{A}^{(s)}(z)$, and $\mathfrak{f}^{(k)}(z)$ are algebraically independent over $\mathbb{Q}(z)$.

It is worth noting that the strategies given in \cite{ABD19} and \cite{vargas2} do not apply to study of the algebraic independence of the power series $\mathfrak{A}(z)$, $\mathfrak{f}(z)$, $J_0(\pi_3z)$, and $\exp(\pi_3z)$. Indeed, while $\mathfrak{A}(z)$ is a $G$-function that is $p$-Lucas for all primes $p$, the power series $\mathfrak{f}(z)$ is a $G$-function that is not $p$-Lucas for any prime $p$. So, the main result of \cite{ABD19} is not applicable to this pair. However, the results of \cite{vargas2}--specifically Proposition 9.3-- can be used to study the algebraic independence of $\mathfrak{A}(z)$ and $\mathfrak{f}(z)$. Furthermore, the strategies developed in \cite{ABD19,vargas2} do not allow to consider $E$-functions, note that $J_0(\pi_3z)$ and $\exp(\pi_3z)$ are $E$-functions. Finally, our method offers the additional advantage of studying the algebraic independence of their derivatives, something that  cannot be addressed  \textit{a priori}  using the approaches of \cite{ABD19, vargas2}.

 \section{Transcendence over the analytic elements}\label{sec_trans}
The main goal of this section is to prove that $J_0(\pi_p z)$ is transcendental over $E_K$ for $p>2$ with $K = \mathbb{Q}_p(\pi_p)$, where $\pi_p^{p-1}=-p$. Note that $K$ is totally ramified because $\pi_p$ is a solution of the Eisenstein polynomial $Y^{p-1}+p$. Furthermore, the uniformizer of $K$ is $\pi_p$. In order to prove that $J_0(\pi_p z)$ is transcendental over $E_K$, we first estimate the $p$-adic norm of its coefficients.

\begin{lemm}\label{lem_3_adic}
Let $p$ be an odd prime. Then
\begin{enumerate}[label=(\roman*)]
\item For an integer $n\geq1$, let $n=n_0+n_1p+\cdots+n_lp^l$ be its $p$\nobreakdash-adic expansion and let $\Sigma_n=n_0+n_1+\cdots+n_l$. Then 
\begin{equation}\label{eq_3_adic}
\left|\frac{\pi_p^{2n}}{4^n(n!)^2}\right|_p=(1/p)^{\frac{2\Sigma_n}{p-1}},
\end{equation}
\item for every $1<s\leq N$ ($N$ a positive integer), 
\begin{equation}\label{eq_ine_0}
\left|\binom{N}{s}f_1(z)^s\right|_{\mathcal{G}}<\left|Nf_1(z)\right|_{\mathcal{G}},
\end{equation}
where $f_1(z)=\sum_{n\geq1}\frac{\pi_p^{2n}}{4^n(n!)^2}z^{2n}.$
\end{enumerate}
\end{lemm}
\begin{proof}\hfill\\
(i) It is well-known that, for every integer $n\geq1$, $|n!|_p=(1/p)^{(n-\Sigma_n)/(p-1)}$. Whence, we immediately obtain the equality~\eqref{eq_3_adic} for every integer $n\geq1$.\smallskip

(ii) It follows from (i) that $|f_1(z)|_{\mathcal{G}}=(1/p)^{2/p-1}$. Thus, the inequality~\eqref{eq_ine_0} is equivalent to proving that, for every $1<s\leq N$, 
\begin{equation}\label{eq_inequality}
\left|\binom{N}{s}\right|_p <\left|N\right|_p p^{\frac{2(s-1)}{p-1}}.
\end{equation}
Let us suppose that $p$ does not divide $N$. Then $|N|_p=1$ and since $s>1$, we have $p^{2(s-1)/(p-1)}>1$. Further, the $p$\nobreakdash-adic norm of $\binom{N}{s}$  is less than or equal to 1 given that $\binom{N}{s}$ is an integer. Hence, the  inequality~\eqref{eq_inequality} holds.

Let us now suppose that $p$ divides $N$ and let $r$ be the $p$\nobreakdash-adic valuation of $N$. Then, we have $\left|N\right|_p p^{2(s-1)/(p-1)}=p^{\frac{2(s-1)}{p-1}-r}$. If $\frac{2(s-1)}{p-1}>r$ then it is clear that the inequality~\eqref{eq_inequality} is true. Now, we suppose that $\frac{1}{p-1}\leq \frac{2(s-1)}{p-1}\leq r$. We first observe that $(N-1)(N-2)(N-3)(N-4)\cdots(N-s+1)=(-1)^{s-1}(s-1)!+Nt$ for some integer $t>0$. Whence, $$(N-1)(N-2)(N-3)(N-4)\cdots(N-s+1)=(-1)^{s-1} p^{\frac{s-1-\Sigma_{s-1}}{p-1}}a+p^{r}bt,$$
where $a,b$ are integers not divisible by $p$. Since $\frac{1}{p-1}\leq\frac{2(s-1)}{p-1}\leq r$ and $s>1$, we have  $\frac{1}{p-1}\leq\frac{s-1}{p-1}<\frac{2(s-1)}{p-1}\leq r$. In addition, it is clear that $s-1-\Sigma_{s-1}<s-1$ because $s>1$. So $\frac{s-1-\Sigma_{s-1}}{p-1}<\frac{s-1}{p-1}< r$. Thus $$|(N-1)(N-2)(N-3)(N-4)\cdots(N-s+1)|_p=(1/p)^{\frac{s-1-\Sigma_{s-1}}{p-1}}.$$
Consequently, $$|N(N-1)(N-2)\cdots(N-s+1)|_p=(1/p)^{r+\frac{s-1-\Sigma_{s-1}}{p-1}}.$$
So, by using the fact that $v_p(s!)=(s-\Sigma_s)/(p-1)$, we get $$\left|\binom{N}{s}\right|_p=\left(\frac{1}{p}\right)^{r+\frac{\Sigma_s-\Sigma_{s-1}-1}{p-1}}.$$
It is not hard to see that, for every $s\geq2$, $s-\Sigma_{s-1}>1$ and that $\Sigma_s\geq1$. Thus, $$\frac{\Sigma_s-\Sigma_{s-1}-1}{p-1}+\frac{s-1}{p-1}=\frac{s-\Sigma_{s-1}-1+\Sigma_s-1}{p-1}>0.$$
So $$\frac{\Sigma_s-\Sigma_{s-1}-1}{p-1}>-\frac{s-1}{p-1}>\frac{-2(s-1)}{p-1}.$$
Whence, $$r+\frac{\Sigma_s-\Sigma_{s-1}-1}{p-1}>r-\frac{2(s-1)}{p-1}.$$
Notice that $0\leq r-2(s-1)/(p-1)$ because we have assumed that $2(s-1)/(p-1)\leq r$. Then $$\left|\binom{N}{s}\right|_p=(1/p)^{r+\frac{\Sigma_s-\Sigma_{s-1}-1}{p-1}}\leq(1/p)^{r-\frac{2(s-1)}{p-1}}=|N|_pp^{\frac{2(s-1)}{p-1}}.$$

That completes the proof of the inequality~\eqref{eq_inequality}. 
\end{proof}
\begin{theo}\label{theo_bessel}
Let $p$ be an odd prime. Then, for all integers $s\geq0$, $J_0^{(s)}(\pi_pz)$ is transcendental over $E_K$.
\end{theo}

The idea of the proof of this theorem is to assume, for the shake of contradiction, that $J_0(\pi_p z)$ is algebraic over $E_K$. Then, for some integer $N > 0$, $J_0^N(\pi_p z) = \sum_{n \geq 0} b_n z^n$ belongs to $E_K$. Using the estimates provided by the previous lemma, we show that there exists an integer $l > 0$ such that the sequence $\{ b_n \bmod \pi_p^l \}_{n \geq 0}$ is lacunary. This implies that $J_0^N(\pi_p z)$ cannot lie in $E_K$, leading to a contradiction.
\begin{proof}
 We put $\mathfrak{B}(z)=J_0(\pi_pz)$. Aiming for a contradiction, suppose that $\mathfrak{B}(z)$ is algebraic over $E_K$. Given that $J_0(z)$ is solution of $\delta^2+z^2$ then $\mathfrak{B}(z)$ is solution of  $\delta^2-\pi_p^2z^2$. It is clear that this differential operator is MOM at zero and given that by assumption $p\neq2$ then, according to Dwork~\cite[§4]{Bessel}, this differential operator has a strong Frobenius structure. Hence, $\mathfrak{B}(z)\in\mathcal{M}\mathcal{F}(K)$ and, since we suppose that $\mathfrak{B}(z)$ is algebraic over $E_K$, by (i) of Theorem~\ref{theo_alg_ind},  there exists an integer $N>0$ such that $\mathfrak{B}(z)^N\in E_{0,K}$. By definition, we have $$\mathfrak{B}(z)=\sum_{n\geq0}\frac{(-1)^n\pi_p^{2n}}{4^n(n!)^2}z^{2n}.$$
We set $f_1(z)=\sum_{n\geq1}\frac{(-1)^n\pi_p^{2n}}{4^n(n!)^2}z^{2n}.$  Then  $\mathfrak{B}(z)^N=(1+f_1(z))^N=\sum_{s=0}^N\binom{N}{s}f_1(z)^s.$ 

According to (ii) of Lemma~\ref{lem_3_adic} we know that, for every $1<s\leq N$, 
\begin{equation*}
\left|\binom{N}{s}f_1(z)^s\right|_{\mathcal{G}}<\left|Nf_1(z)\right|_{\mathcal{G}}.
\end{equation*}

From (i) of Lemma~\ref{lem_3_adic}, we deduce that $|f_1(z)|_{\mathcal{G}}=(1/p)^{2/p-1}$. Thus, for all $1<s\leq N$, 
\begin{equation}\label{eq_N_s}
\left|\binom{N}{s}f_1(z)^s\right|_{\mathcal{G}}<(1/p)^{v_p(N)+\frac{2}{p-1}}.
\end{equation} 
By definition, $\pi_p$ is solution of the Eisenstein polynomial $X^{p-1}+p$ and thus, $\pi_p$ is a uniformizer of $K=\mathbb{Q}_p(\pi_p)$. Given that, for any \( s \geq 1 \), we have \( \binom{N}{s} f_1(z)^s \in K[[z]] \), Inequality~\eqref{eq_N_s} implies that, for every \( 1 < s \leq N \), \( \binom{N}{s} f_1(z)^s \in (\pi_p)^{v_p(N)(p-1)+3} \mathcal{O}_K[[z]] \). Consequently, for every $1< s\leq N$, $$\binom{N}{s}f_1(z)^s=0\bmod\pi_p^{(p-1)v_p(N)+3}\mathcal{O}_{K}[[z]].$$ 
Therefore, we obtain 
\begin{equation}\label{eq_b_n}
\mathfrak{B}(z)^{N}=1+Nf_1(z)\bmod\pi_p^{(p-1)v_p(N)+3}\mathcal{O}_{K}[[z]].
\end{equation}
Again, from (i) of Lemma~\ref{lem_3_adic},  we conclude that, for all $n\geq0$, 
\begin{equation}\label{eq_p_norm}
\left|\frac{\pi_p^{2p^n}}{4^{p^n}(p^n)!^2}\right|_p=(1/p)^{2/p-1}.
\end{equation}

Consequently, for all $n\geq0$, $N\frac{\pi_p^{2p^n}}{4^{p^n}(p^n)!^2}\notin\pi_p^{(p-1)v_p(N)+3}\mathcal{O}_{K}$.

Now, it is clear that if $m\notin\{p^n\}_{n\geq0}$ then $\Sigma_m>1$ and thus, by using (i) of Lemma~\ref{lem_3_adic} again,  we obtain
\begin{equation*}
\left|\frac{\pi_p^{2m}}{4^{m}(m)!^2}\right|_p=(1/p)^{\frac{2\Sigma_m}{p-1}}<(1/p)^{\frac{2}{p-1}}.
\end{equation*}
 In particular, if $m\notin\{p^n\}_{n\geq0}$ then $N\frac{\pi_p^{2m}}{4^{m}(m)!^2}\in\pi_p^{(p-1)v_p(N)+3}\mathcal{O}_{K}$. Hence, we get 
 \begin{equation}\label{eq_f_1}
\mathfrak{B}(z)^N=1+Nf_1(z)=1-N\sum_{n\geq1}\frac{\pi_p^{2p^n}}{4^{p^n}(p^n)!^2}z^{2p^n}\bmod\pi_p^{(p-1)v_p(N)+3}\mathcal{O}_{K}[[z]].
 \end{equation}
If we write $\mathfrak{B}(z)^N=\sum_{n\geq0}b_nz^n$ then, Equality~\eqref{eq_f_1} implies that the sequence $\{b_n\bmod\pi_{p}^{v_p(N)(p-1)+3}\}_{n\geq0}$ is lacunary.  That fact will be crucial in deriving a contradiction.

Since $\mathfrak{B}(z)^{N}$ belongs to $E_{0,K}$  there are a rational function $R(z)\in K(z)$ such that $$\mathfrak{B}(z)^N=R(z)\bmod\pi_p^{(p-1)v_p(N)+3}\mathcal{O}_{K}[[z]].$$ Let us write $R(z)=\sum_{n\geq-M}a_nz^n$ with $M\in\mathbb{N}$. Then, it follows from the previous equality and Equations~\eqref{eq_b_n} and \eqref{eq_f_1} that 
\begin{equation}\label{eq_rational}
\sum_{n\geq-M}a_nz^n= 1-N\sum_{n\geq1}\frac{\pi_p^{2p^n}}{4^{p^n}(p^n)!^2}z^{2p^n}\bmod \pi_p^{(p-1)v_p(N)+3}\mathcal{O}_{K}[[z]].
\end{equation} 
On the one hand, since $R(z)$ is a rational function, the sequence $\{a_n\}_{n\geq0}$ is almost periodic. That is, for any $\epsilon>0$, there are an integer $n_{\epsilon}$ such that, for all $n\geq n_{\epsilon}$, $|a_{n+n_{\epsilon}}-a_{n}|_p<\epsilon$. On the other hand, it follows from Equation~\eqref{eq_rational} that, for all $m\notin\{2p^k: k\geq1\}$, $a_m\in\pi_p^{(p-1)v_p(N)+3}\mathcal{O}_{K}$ and thus, $|a_m|_p\leq (1/p)^{v_p(N)+\frac{3}{p-1}}$. Now, if we take $\epsilon'= (1/p)^{v_p(N)+\frac{3}{p-1}}$ there is an integer $n_{\epsilon'}$ such that, for all $n\geq n_{\epsilon'}$, $|a_{n+n_{\epsilon'}}-a_n|_p<\epsilon'$. In addition, it is clear that there exists $n_0\in\{2p^{k}: k\geq1\}$ such that $n_0\geq n_{\epsilon'}$ and $n_0+n_{\epsilon'}\notin\{2p^{k}: k\geq1\}$. If $n_0=2p^{k_0}$, it follows from \eqref{eq_rational} that $$a_{n_0}=\lambda\bmod\pi_p^{(p-1)v_p(N)+3}\mathcal{O}_{K}\text{ with }\lambda=-N\frac{\pi_p^{2p^{k_0}}}{4^{p^{k_0}}(p^{k_0})!^2}.$$ Thus $|a_{n_0}-\lambda|_p\leq\epsilon'$. Further, from Equation~\eqref{eq_p_norm}, we have $|\lambda|_p=(1/p)^{v_p(N)+\frac{2}{p-1}}>\epsilon'$. Since $|a_{n_0}|_p=|a_{n_0}-\lambda+\lambda|_p$ and $|a_{n_0}-\lambda|_p\leq\epsilon'<|\lambda|_p$, and since moreover the norm is non\nobreakdash-Archimedean, we get that $|a_{n_0}-\lambda+\lambda|_p=|\lambda|_p$ and hence, $|a_{n_0}|_p=|\lambda|_p$. Given that $n_0+n_{\epsilon'}\notin\{2p^{k}: k\geq1\}$ we know that $|a_{n_0+n_{\epsilon'}}|_p\leq\epsilon'$. Therefore, $|a_{n_0}|_p>|a_{n_0+n_{\epsilon'}}|_p$. Again, given that the norm is non\nobreakdash-Archimedean, we get that $|a_{n_0+n_{\epsilon'}}-a_{n_0}|_p=|a_{n_0}|_p=|\lambda|_p>\epsilon'$. But, that is a contradiction because  $|a_{n_0+n_{\epsilon'}}-a_{n_0}|_p< \epsilon'$. So $\mathfrak{B}(z)$ is transcendental over $E_{0,K}$.
Now, by (ii) of Theorem~\ref{theo_alg_ind}, we get that, for all $s\geq0$, $\mathfrak{B}^{(s)}(z)$ is transcendental over $E_K$. 
That completes the proof.

\end{proof}

The power series $\exp(\pi_pz)$ is algebraic over $E_K$ because $\exp(\pi_pz)^p$ belongs to $E_{K}$. Indeed, we have $$\exp(\pi_pz)^p=\exp(p\pi_pz)=\sum_{n\geq0}\frac{p^n\pi_p^n}{n!}z^n.$$
  It follows from (i) of Lemma~\ref{lem_3_adic} that $$\left|\frac{p^n\pi_p^n}{n!}\right|_p=(1/p)^{n+\frac{\Sigma_n}{p-1}}.$$
  So $\lim_{n\rightarrow\infty}\left|\frac{p^n\pi_p^n}{n!}\right|_p=0$ and thus  $\exp(\pi_pz)^p$ is the limit of the polynomials $\left\{\sum_{j=0}^{n}\frac{p^j\pi_p^j}{j!}z^j\right\}_{n\geq0}$.\medskip
  
Finally, note that for any $s\geq0$, $\mathfrak{B}^{(s)}(z)$ is transcendental over $E_K[\exp(\pi_pz)]$ because $\mathfrak{B}^{(s)}(z)$ is transcendental over $E_{K}$ and $\exp(\pi_pz)$ is algebraic over $E_K$.

 \section{Algebraic Independence over the analytic elements}\label{sec_results}

The main result of this section is Theorem~\ref{theo_criterion_alg_ind}, which provides us a criterion to determine when given power series $f_1(z), \ldots, f_m(z) \in \mathcal{M}\mathcal{F}(K)$ are algebraically independent over $E_K$. To state this result, we must frist recall the notions of \emph{poles} and \emph{residues} for elements in \( E_{0,K} \). To that end, we begin by introducing some notations and recalling the Mittag-Leffler's Theorem for analytic elements. Recall that $\mathcal{O}_{\mathbb{C}_p}$ is the set of elements in $\mathbb{C}_p$ with norm less than or equal to $1$. The maximal ideal of  $\mathcal{O}_{\mathbb{C}_p}$ is denoted by $\mathfrak{m}$. For any $\alpha\in\mathcal{O}_{\mathbb{C}_p}$, we put $D_{\alpha}=\{x\in\mathcal{O}_{\mathbb{C}_p}: |x-\alpha|<1\}$. We also put $D_{\infty}=\{x\in\mathbb{C}_p:|x|\geq1\}$. Note that $\mathfrak{m}=D_0$.

\begin{rema}\label{rem_discos}
Let $\alpha,\beta$ be in $\mathcal{O}_{\mathbb{C}_p}$. As the norm is non-Archimedean then, $D_{\alpha}\cap D_{\beta}=\emptyset$ if and only if $|\alpha-\beta|=1$ and $D_{\alpha}=D_{\beta}$ if and only if $|\alpha-\beta|<1$.
\end{rema}

 Let $\alpha$ be in $\mathcal{O}_{\mathbb{C}_p}$ and let $\mathbb{C}_p[z]_{\mathfrak{m}_{\alpha}}$ be the set of rational functions $R(z)$ in $\mathbb{C}_p(z)$ such that every pole of $R(z)$ belongs to $D_{\alpha}$. Then, we let $E^{\alpha}$ denote the completion of $\mathbb{C}_p[z]_{\mathfrak{m}_{\alpha}}$ for the Gauss norm. For any set $A\subset\mathbb{C}_p$, $E(A)$ is the completion of ring of the rational functions in $\mathbb{C}_p(z)$ that do not have poles in $A$. So, Mittag-Lefflet's Theorem says (see \cite[Theorem 15.1]{E95}) that if $f\in E_{0,K}$ is bounded\footnote{We say that $f\in E_{0,K}$ is bounded if there exits a constant $C>0$ such that for all $x\in D_0$, $|f(x)|\leq C$.} then, there exists a unique sequence $\{\alpha_n\}_{n\geq1}$ in $\mathcal{O}_{\mathbb{C}_p}$ such that, for every $n$, $|\alpha_n|=1$, $|\alpha_n-\alpha_m|=1$ for every $n\neq m$, and there exists a unique sequence $\{f_n\}_{n\geq0}$ in $E_{0,K}$ such that $f_0$ belongs to $E(\mathcal{O}_{\mathbb{C}_p})$ and, for every $n>0$, $f_n$ belongs to $E^{\alpha_n}$, and 
\begin{equation}\label{eq_mittag}
f=\sum_{n=0}^{\infty}f_n.
\end{equation}
\begin{defi}[Pole] Let $f$ be in $E_{0,K}$ bounded and let $\alpha$ be in $\mathcal{O}_{\mathbb{C}_p}$. We say that $D_{\alpha}$ is a pole of $f$ if there exists $n\geq1$ such that $\alpha_n\in D_{\alpha}$ and $f_n\neq0$. We denote the set of poles of $f$ by  \textbf{Pol}($f$).
\end{defi}

\begin{defi}[Residue] Let $f\in E_{0,K}$ be a bounded element, and let $\alpha\in\mathcal{O}_{\mathbb{C}_p}$ such that $D_{\alpha}$ is a pole of $f$. According to the decomposition given by Equation~\eqref{eq_mittag}, this means that there exists $n\geq1$ such that $\alpha_n\in D_{\alpha}$ and $f_n\neq0$. Since $f_n\in E^{\alpha_n}$, $f_n$ is the limit of rational functions $R_j(z)\in\mathbb{C}_p(z)$ such that every pole of $R_j(z)$ belongs to $D_{\alpha_n}$. For each $j$, we may write $$R_j(z)=\sum_{s\in\mathbb{Z}}a_{s,j}(z-\alpha)^s$$ with $a_{s,j}\in\mathbb{C}_p$ for all $s\in\mathbb{Z}$. So, the residue of $f$ at $D_{\alpha}$ is defined by $$res(f, D_{\alpha})=\lim_{j\rightarrow\infty}a_{-1,j}.$$
\end{defi}

We will see in Lemma~\ref{lem_poles} that this definition does not depend on the particular representation of $f_n$ as the limit of rational functions, nor on the choice of the disk $D_{\alpha}$. The latter means that if $\beta\in D_{\alpha}$ then, by Remark~\ref{rem_discos}, $D_{\alpha}=D_{\beta}$, and thus we must have $res(f, D_{\alpha})=res(f, D_{\beta})$. Now, let us recall the following result from \cite{vargas6}.

\begin{theo}[Theorem 3.1 of \cite{vargas6}]\label{theo_analytic_element}
Let $K$ be a Frobenius field and let $f(z)\in\mathcal{M}\mathcal{F}(K)$. Then
\begin{enumerate}[label=(\roman*)]
\item $f(z)$ belongs to $1+z\mathcal{O}_K[[z]]$,
\item $f(z)/f(z^{p^h})$ for some integer $h>0$.
\item $f'(z)/f(z)\in E_{0,K}$.
\end{enumerate}
\end{theo}

Thanks to this result,  if $f(z)$ belongs to $\mathcal{M}\mathcal{F}(K)$ then $f'(z)/f(z)$ is a bounded element of $E_{0,K}$ and thus, it makes sense to speak of the poles and residues of $f'(z)/f(z)$. Furthermore, we will prove in Lemma~\ref{lem_res} that for any $D_{\alpha}\in\textbf{Pol}(f'(z)/f(z))$, we have $res(f'(z)/f(z),D_{\alpha})\in\mathbb{Z}_p$.

\begin{rema}
It follows from \cite{vargas6} that the integer $h>0$ appearing in (ii) is the period of the strong Frobenius structure of the operator $\mathcal{L}\in E_p[d/dz]$ satisfying $\mathcal{L}(f)=0$.
\end{rema}

 Another important ingredient in the statement of Theorem~\ref{theo_criterion_alg_ind} involves the concepts of \emph{singular points} and \emph{exponents} for differential operators with coefficients in rings that are not necessarily integral domains. Our definitions are inspired from the classical setting. Therefore, let us first recall these concepts in the classical case, i.e, for differential operators with coefficients in $\mathbb{Q}(z)$. Let $\mathcal{L}=\frac{d}{dz^n}+b_1(z)\frac{d}{dz^{n-1}}+\cdots+b_{n-1}(z)\frac{d}{dz}+b_n(z)\in\mathbb{Q}(z)[d/dz]$. We say that $\alpha\in\mathbb{C}$ is a regular singular point of $\mathcal{L}$ if, for all $1\leq j\leq n$, $(z-\alpha)^jb_j(z)\in\mathbb{C}[[z-\alpha]]$. Furthermore, the exponents of $\mathcal{L}$ at $\alpha$ are the roots of the polynomial $$X(X-1)\cdots(X-n+1)+s_1X(X-1)\cdots(X-n+2)+\cdots+s_{n-1}X+s_n,$$
 where $s_j=[(z-\alpha)^jb_j(z)](\alpha)$. 

Now, let $K$ be a finite extension of $\mathbb{Q}_p$ and let $\mathcal{O}_{K(z)}$ be the set of rational functions $R(z)\in K(z)$ with Gauss norm less than or equal to $1$. Let $\mathscr{M}_K$ denote the set of rational functions $R(z)\in K(z)$ with Gauss norm less than $1$. Note that $\mathscr{M}_K$ is the maximal ideal of  $\mathcal{O}_{K(z)}$. Now, let $\mathcal{L}$ in $\mathcal{O}_{K(z)}[d/dz]$. Then, for any integer $t\geq1$, we can reduce $\mathcal{L}$ modulo $\mathscr{M}_K^{t}$. We denote this reduction by $\mathcal{L}_{\mid\mathscr{M}_K^{t}}$. 

 \begin{defi}[Singular point and exponents]
 Let $\mathcal{L}=\frac{d}{dz^n}+b_1(z)\frac{d}{dz^{n-1}}+\cdots+b_{n-1}(z)\frac{d}{dz}+b_n(z)$ be an element of $\mathcal{O}_{K(z)}[d/dz]$, where $K$ a finite extension of $\mathbb{Q}_p$. Let $\alpha\in\mathbb{C}_p$ be an element of norm  $1$, and let $L=K(\alpha)$. Let $t\geq0$ be an integer. We say that ${\alpha}$ is a regular singular point of $\mathcal{L}_{\mid\mathscr{M}_K^t}$ if, for every $1\leq j\leq n$, we have $(z-\alpha)^jb_j(z)=F_j(z)\bmod\mathscr{M}_L^t\mathcal{O}_{L(z)}$, where $F_j\in L(z)\cap\mathcal{O}_{L}[[z-\alpha]]$. Further,  consider the polynomial $$P_{\alpha, t}(X)=X(X-1)\cdots(X-n+1)+s_1X(X-1)\cdots(X-n+2)+\cdots+s_{n-1}X+s_n,$$ where $s_j=F_j(\alpha)\bmod\pi^t_L\mathcal{O}_L$, and $\pi_L$ is a uniformizer of $L$. 
 
Any root of $P_{\alpha, t}(X)$ is called an exponent of $\mathcal{L}_{\mid\mathscr{M}_K^t}$ at $\alpha$. The polynomial $P_{\alpha, t}(X)$ is referred to as the indicial polynomial of $\mathcal{L}_{\mid\mathscr{M}_K^t}$ at $\alpha$. Note that $P_{\alpha,t}(X)$ has coefficients $\mathcal{O}_L\big/\pi^t_L\mathcal{O}_L$.
 \end{defi}
 
 We are now ready to state our criterion for algebraic independence.

 \begin{theo}\label{theo_criterion_alg_ind}
 Let $K$ be a Frobenius field, let $f_1(z),\ldots, f_r(z)$ be in $\mathcal{M}\mathcal{F}(K)$, and let $\mathcal{L}_1,\ldots,\mathcal{L}_r$ be in $\mathcal{O}_{K(z)}[d/dz]$ such that $\mathcal{L}_i(f_i)=0$ for every $1\leq i\leq r$.
 \begin{enumerate}
 \item Suppose that, for every $i\in\{1,\ldots, r\}$, there exists $D_{\alpha_{c_i}}\in\textbf{Pol}(f'_i(z)/f_i(z))$ such that $D_{\alpha_{c_i}}\notin\textbf{Pol}(f'_j(z)/f_j(z))$ for all $j\in\{1,\ldots, r\}\setminus\{i\}$ and that $res(\frac{f'_i(z)}{f_i(z)},D_{\alpha_{c_i}})\neq0$.\smallskip
 
 \item Suppose that, for each $i\in\{1,\ldots, r\}$, $\alpha_{c_i}$ is a regular singular point of $\mathcal{L}_{i\mid\mathscr{M}_K^2}$. 
 \end{enumerate}
 
If for any $s\in\{1,\ldots, r\}$, $res\left(\frac{f_s'(z)}{f_s(z)},D_{\alpha_{c_s}}\right)\bmod p^2$ is not an exponent of $\mathcal{L}_{s\mid\mathscr{M}_K^2}$ at $\alpha_{c_s}$ then $f_1(z),\ldots, f_r(z)$ are algebraically independent over $E_K$. 
  \end{theo}

Note that it makes sense to consider $res\left(\frac{f'(z)}{f(z)},D_{\alpha}\right)\bmod p^2$ because, as we have already said, we will see in Lemma~\ref{lem_res} that $res\left(\frac{f'(z)}{f(z)},D_{\alpha}\right)$ belongs to $\mathbb{Z}_p$. 

It is worth noting that if the condition (1) fails, then the power series are  not necessarily algebraically independent over $E_K$. For example, we will see in Section~\ref{sec_apli} that the power series $$\mathfrak{h}(z)=\sum_{n\geq0}\frac{1}{64^n}\binom{2n}{n}^3z^n,\quad \mathfrak{f}(z)=\sum_{n\geq0}\frac{-1}{(2n-1)64^n}\binom{2n}{n}^3z^n$$
belong to $\mathcal{M}\mathcal{F}(\mathbb{Q}_3)$ and are algebraic dependent over $E_{\mathbb{Q}_3}$. However, $\textbf{Pol}(\mathfrak{h}'(z)/\mathfrak{h}(z))=\{D_{1}\}$ and $D_1\in\textbf{Pol}(\mathfrak{f}'(z)/\mathfrak{f}(z))$. 

The remainder of this section is devoted to establishing that the notion of residue is well-defined, to proving several of its key properties, and to discussing related properties concerning the notion of exponent. These properties will be useful in the proof of Theorem~\ref{theo_criterion_alg_ind}.

\subsubsection{Residues}
We start with the following lemma.
\begin{lemm}\label{lem_conv}
Let $\alpha$ and $\beta$ be in $\mathbb{C}_p$.
\begin{enumerate}[label=(\roman*)]
\item If  $|\alpha-\beta|<1$ then $$\frac{1}{z-\beta}=\frac{1}{z-\alpha}\sum_{l\geq0}\left(\frac{\beta-\alpha}{z-\alpha}\right)^l.$$
\item If  $|\alpha-\beta|\geq1$ then $\frac{1}{z-\beta}=\sum_{s\geq}a_s(z-\alpha)^s$ with $a_s\in\mathcal{O}_{\mathbb{C}_p}$ for all $s\geq0$.
\end{enumerate}

\end{lemm}

\begin{proof}
(i)\quad Since $|\alpha-\beta|<1$, we have $\lim_{l\rightarrow\infty}|\alpha-\beta|^l=0$. If $|\alpha|\leq 1$ then $\left|\frac{\beta-\alpha}{z-\alpha}\right|_{\mathcal{G}}=|\beta-\alpha|<1$ and if $|\alpha|>1$ then $\left|\frac{\beta-\alpha}{z-\alpha}\right|_{\mathcal{G}}=\frac{|\beta-\alpha|}{|\alpha|}<1$ because $|\beta-\alpha|<1<|\alpha|$. In both cases we obtain $\lim_{l\rightarrow\infty}\left|\left(\frac{\beta-\alpha}{z-\alpha}\right)^l\right|_{\mathcal{G}}=0$ and thus, the sum $\sum_{l\geq0}\left(\frac{\beta-\alpha}{z-\alpha}\right)^l$ exists in $E_p$. Let us put $x_0=\frac{1}{z-\alpha}\sum_{l\geq0}\left(\frac{\beta-\alpha}{z-\alpha}\right)^l$ and $S_1=\frac{1}{z-\alpha}+\frac{\beta-\alpha}{(z-\alpha)(z-\beta)}$. So, $S_1-x_0=\left(\frac{\beta-\alpha}{z-\alpha}\right)\left(\frac{1}{z-\beta}-x_0\right)$ and for this reason, $S_1-\left(\frac{\beta-\alpha}{z-\beta}\right)\frac{1}{z-\beta}=x_0\left(1-\frac{\beta-\alpha}{z-\beta}\right)$. But, it is clear that $S_1=\frac{1}{z-\beta}$. Whence, $S_1-\left(\frac{\beta-\alpha}{z-\beta}\right)\frac{1}{z-\beta}=\frac{1}{z-\beta}\left(1-\frac{\beta-\alpha}{z-\beta}\right)$. Therefore, $x_0=\frac{1}{z-\beta}$.

(ii)\quad It is clear that $\frac{1}{z-\beta}=\sum_{s\geq0}\frac{(-1)^{s}}{(\alpha-\beta)^{s+1}}(z-\alpha)^s$.

That finishes the proof.
\end{proof}
We have the following remark.
\begin{rema}\label{rem_poles}
Let $R(z)$ be in $\mathbb{C}_p(z)$ and let $\alpha_1,\ldots, \alpha_r$ be the poles of $R(z)$. Suppose that $|\alpha_i|=1$ for all $1\leq i\leq r$. Since $\mathbb{C}_p$ is algebraically closed, by partial fraction decomposition, we get that $R(z)=T(z)+F_1(z)+\cdots+F_r(z)$, where  $T(z)$ is a polynomial and $F_i(z)=\sum_{k=1}^{n_i}\frac{a_{i,k}}{(z-\alpha_i)^{k}}$, where $a_{i,k}\in\mathbb{C}_p$ and $n_i$ is the order of $R(z)$ at $\alpha_i$. In particular, for all $i\in\{1,\ldots, r\}$, $res(R(z),\alpha_i)=a_{i,1}$.
\end{rema}
The following lemma is crucial to see that the notion of residue is well-defined.
\begin{lemm}\label{lem_poles}
We keep the notions introduced in Remark~\ref{rem_poles}. 
\begin{enumerate}[label=(\roman*)]
\item Let $\alpha$ be in $\{\alpha_1,\ldots,\alpha_r\}$. Then, $$R(z)=\sum_{s\in\mathbb{Z}}a_s(z-\alpha)^s\text{ with }a_s\in\mathbb{C}_p\text{ for all }s\in\mathbb{Z}\text{ and }a_{-1}=\sum_{\alpha_i\in S}a_{i,1}.$$
where $S$ is the set of poles of $R(z)$ in $D_{\alpha}$.

\item Let $\alpha$ be in $\{\alpha_1,\ldots,\alpha_r\}$ and let $\tau\in D_{\alpha}$. Then $$R(z)=\sum_{s\in\mathbb{Z}}b_s(z-\tau)^s\text{ with }b_s\in\mathbb{C}_p\text{ for all }s\in\mathbb{Z}\text{ and }b_{-1}=\sum_{\alpha_i\in S}a_{i,1},$$
where $S$ is the set of poles of $R(z)$ in $D_{\alpha}$. 
\end{enumerate}
\end{lemm}

\begin{proof}
 (i). Without loss of generality we can assume that $S=\{\alpha_1,\ldots,\alpha_r\}\cap D_{\alpha}=\{\alpha_1,\ldots,\alpha_q\}$. It is clear that $R(z)=T(z)+\sum_{i=1}^qF_i(z)+\sum_{i=q+1}^rF_i(z)$. So, by (ii) of Lemma~\ref{lem_conv}, we have $\sum_{i=q+1}^rF_i(z)\in\mathbb{C}_p[[z-\alpha]]$ and thus, $T(z)+\sum_{i=q+1}^rF_i(z)\in\mathbb{C}_p[[z-\alpha]]$. In addition, by (i) of Lemma~\ref{lem_conv}, we deduce that, for all $1\leq i\leq q$, $$\frac{1}{z-\alpha_i}=\frac{1}{z-\alpha}\sum_{l\geq0}\left(\frac{\alpha_i-\alpha}{z-\alpha}\right)^l.$$ So, $$\sum_{i=1}^qF_i(z)=\sum_{i=1}^q\left(\sum_{k=1}^{n_i}a_{i,k}\left(\frac{1}{z-\alpha}\sum_{l\geq0}\left(\frac{\alpha_i-\alpha}{z-\alpha}\right)^l\right)^k\right)$$

Thus,  $R(z)=\sum_{s\in\mathbb{Z}}a_s(z-\alpha)^s\text{ with }a_s\in\mathbb{C}_p\text{ for all }s\in\mathbb{Z}$ and $a_{-1}=\sum_{i=1}^qa_{i,1}$.

(ii). By (i), we know that $R(z)=\sum_{s\in\mathbb{Z}}a_s(z-\alpha)^s\text{ with }a_s\in\mathbb{C}_p\text{ for all }s\in\mathbb{Z}$ and $a_{-1}=\sum_{\alpha_i\in S}a_{i,1}$. 
In particular, $R(z)=\sum_{s\leq-1}a_s(z-\alpha)^s+\sum_{s\geq0}a_s(z-\alpha)^s$. Notice that $z-\alpha=z-\tau+\tau-\alpha$ and hence, for all integers $s\geq0$, $(z-\alpha)^s=\sum_{k=0}^s\binom{s}{k}(\tau-\alpha)^{n-k}(z-\tau)^k$. Since $|\tau-\alpha|<1$, for all integers $m\geq1$, $t_m:=\sum_{k=m}^{\infty}\binom{k}{m}(\tau-\alpha)^{k-m}$ exists in $\mathbb{C}_p$ and therefore $$\sum_{s\geq0}a_s(z-\alpha)^s=\sum_{s\geq0}a_st_s(z-\tau)^s\in\mathbb{C}_p[[z-\tau]].$$
Since $|\tau-\alpha|<1$, Lemma~\ref{lem_conv} gives that $$\frac{1}{z-\alpha}=\frac{1}{z-\tau}\sum_{l\geq0}\left(\frac{\alpha-\tau}{z-\tau}\right)^l.$$
Consequently, $$\sum_{s\leq-1}a_s(z-\tau)^s=\sum_{s\geq1}a_s\left(\frac{1}{z-\tau}\sum_{l\geq0}\left(\frac{\alpha-\tau}{z-\tau}\right)^l.\right)^{s}$$
Thus, $R(z)=\sum_{s\in\mathbb{Z}}b_s(z-\tau)^s\text{ with }b_s\in\mathbb{C}_p\text{ for all }s\in\mathbb{Z}\text{ and }b_{-1}=a_{-1}$ and, by (1) we know that $a_{-1}=\sum_{\alpha_i\in S}a_{i,1}$.
\end{proof}

As an immediate consequence of this lemma, we conclude that the notion of residue for a bounded element in $E_{0,K}$ is well-defined.

\begin{rema}\label{rem_pol_rational}
Let $R(z)$ be in $\mathbb{C}_p(z)$ and let $\alpha_1,\ldots, \alpha_r$ be the poles of $R(z)$. Suppose that $|\alpha_i|=1$ for all $1\leq i\leq r$. Let $\alpha$ be in $\{\alpha_1,\ldots,\alpha_r\}$. It follows from the definition and (1) of previos lemma that $res(R, D_{\alpha})=\sum_{\alpha_i\in D_{\alpha}}res(R,\alpha_i).$
\end{rema}

The following lemma shows some properties of the residue for analytic elements of the form $f'(z)/f(z)\in E_{0,p}$ with $f(z)\in K[[z]]$.
\begin{lemm}\label{lem_res}
Let $K$ be a finite extension of $\mathbb{Q}_p$ and let $f(z)$ be in $K[[z]]$ such that $f(x)$ converges for all $x\in D_{0}$ and $f'(z)/f(z)\in E_{0,p}$. 
\begin{enumerate}[label=(\roman*)]
\item If $D_{\alpha}$ is a pole of $f'(z)/f(z)$ then $res(f'(z)/f(z),D_{\alpha})$ belongs to $\mathbb{Z}_p$.
\item Let $R(z)=\prod_{i=1}^s(z-\beta_i)^{n_i}$ where, for each $1\leq i\leq s$, $n_i\in\mathbb{Z}\setminus\{0\}$ and $|\beta_i|=1$. If $\left|\frac{f'(z)}{f(z)}-\frac{R'(z)}{R(z)}\right|_{\mathcal{G}}\leq 1/p^n$ and  $D_{\alpha}$ is a pole of $f'(z)/f(z)$ then $$res(f'(z)/f(z),D_{\alpha})=\left(\sum_{\beta_i\in D_{\alpha}}n_i\right)\bmod p^n\mathbb{Z}_p.$$

\end{enumerate}

\end{lemm}

\begin{proof}\hfill

(i) We first show by induction on $n\geq1$ that there exists a sequence $\{R_{n}(z)\}_{n\geq1}$ of rational functions with coefficients in $\mathbb{C}_p(z)$ such that, for all $n$, $R_n(x)\neq0$  for all $x\in D_0$ and 
\begin{equation}\label{eq_dl}
\left| \frac{f'(z)}{f(z)}-\sum_{i=1}^n\frac{R_i'(z)}{R_i(z)}\right|_{\mathcal{G}}<1/p^n.
\end{equation}

Since $f(x)$ converges for all $x\in D_0$ and  $f'(z)/f(z)\in E_{0,p}$, it follows from Theorem 3.1 of \cite{R74} that there exists $R_1(z)\in\mathbb{C}_p(z)\cap E_{0,p}$ such that, $R_1(x)\neq0$ for all $x\in D_0$ and $\left|\frac{f'(z)}{f(z)}-\frac{R_1'(z)}{R_1(z)}\right|_{\mathcal{G}}<1/p$.
Now, let us suppose that there are $R_1(z),\ldots, R_n(z)\in\mathbb{C}_p(z)$ such that, for all $1\leq j\leq n$, $R_j(x)\neq0$ for all $x\in D_0$ and that Equation~\eqref{eq_dl} is satisfied. Now, we put $g(z)=\frac{f(z)}{R_1(z)R_2(z)\cdots R_n(z)}$. Then  $$\frac{g'(z)}{g(z)}=\frac{f'(z)}{f(z)}-\sum_{i=1}^n\frac{R_i'(z)}{R_i(z)}.$$ Notice that $g(x)$ converges for all $x\in D_0$ and that $g'(z)/g(z)$ belongs to $E_{0,p}$. Then, by applying  Theorem 3.1 of \cite{R74} to $g(z)$, there exists $R_{n+1}(z)\in\mathbb{C}_p(z)\cap E_{0,p}$ such that, $R_{n+1}(x)\neq0$ for all $x\in D_0$ and $\left|\frac{g'(z)}{g(z)}-\frac{R_{n+1}'(z)}{R_{n+1}(z)}\right|_{\mathcal{G}}<1/p^{n+1}$. In particular, we obtain $$\left| \frac{f'(z)}{f(z)}-\sum_{i=1}^{n+1}\frac{R_i'(z)}{R_i(z)}\right|_{\mathcal{G}}<1/p^{n+1}.$$
So, by induction we conclude that there is a sequence  $\{R_{n}(z)\}_{n\geq1}$ of rational functions with coefficients in $\mathbb{C}_p$ such that, for all $n$, Equation~\eqref{eq_dl} is satisfied. We now put $Q_j(z)=\sum_{i=1}^j\frac{R'_i(z)}{R_i(z)}$. Then, it follows from \eqref{eq_dl} that $f'(z)/f(z)=\lim_{j\rightarrow\infty}Q_j(z)$. Further, $Q_j(z)$ is a rational function of the shape $\sum\frac{n_i}{z-\alpha_i}$, where the $n_i$'s are integers. Further, it is clear that $$Q_j(z)=\sum_{\alpha_i\in D_{\alpha}}\frac{n_i}{z-\alpha_i}+\sum_{\alpha_i\notin D_{\alpha}}\frac{n_i}{z-\alpha_i}.$$ 
By (ii) of Lemma~\ref{lem_conv}, we deduce that $\sum_{\alpha_i\notin D_{\alpha}}\frac{n_i}{z-\alpha_i}\in\mathbb{C}_p[[z-\alpha]]$.
Thus, from Remark~\ref{rem_pol_rational}, we deduce that $res(Q_j(z),D_{\alpha})=\sum_{\alpha_i\in D_{\alpha}}n_i\in\mathbb{Z}$. So, $res(f'/f,D_{\alpha})=\lim_{j\rightarrow\infty}res(Q_j(z),D_{\alpha})$ and since, for every $j$, $res(Q_j(z),D_{\alpha})$ is an integer, we get that $res(f'/f, D_{\alpha})$ belongs to $\mathbb{Z}_p$.

(ii) Since $\left|\frac{f'(z)}{f(z)}-\frac{R'(z)}{R(z)}\right|_{\mathcal{G}}\leq 1/p^n$, it follows that $\frac{f'(z)}{f(z)}=\frac{R'(z)}{R(z)}\bmod\pi_L^n\mathcal{O}_{L}[[z]]$, where $L=K(\beta_1,\ldots,\beta_s)$ and $\pi_L$ is a uniformizer of $L$. It is clear that $\frac{R'(z)}{R(z)}=\sum_{i=1}^s\frac{n_i}{z-\beta_i}$. So, Remark~\ref{rem_pol_rational}, we deduce that $res(R'(z)/R(z),D_{\alpha})=\sum_{\beta_i\in D_{\alpha}}n_i$. Thus, we have $res(f'(z)/f(z), D_{\alpha})=res(R'(z)/R(z),D_{\alpha})\bmod p^n$. So, $$res(f'(z)/f(z),D_{\alpha})=\left(\sum_{\beta_i\in D_{\alpha}}n_i\right)\bmod p^n.$$
\end{proof}

\subsubsection{Exponents and regular singular points}

It is well-know that if $\alpha\in\mathbb{C}$ is a regular singular point of a differential operator $\mathcal{L}\in\mathbb{Q}(z)[d/dz]$, and if $f(z)=(z-\alpha)^{\mu}(\sum_{n\geq0}a_n(z-\alpha)^j)\in\mathbb{C}[[z-\alpha]]$ with $a_0\neq0$ is a solution of $\mathcal{L}$, then $\mu$ is an exponent of $\mathcal{L}$ at $\alpha$. The following lemma shows that the same situation holds for our definition of exponent and regular singular points.
  
 \begin{lemm}\label{lem_exponents}
Let $\mathcal{L}=\frac{d}{dz^n}+b_1(z)\frac{d}{dz^{n-1}}+\cdots+b_{n-1}(z)\frac{d}{dz}+b_n(z)$ be in $\mathcal{O}_{K(z)}[d/dz]$ with $K$ a finite extension of $\mathbb{Q}_p$, let $\alpha$ be an element of $\mathbb{C}_p$ with norm $1$, $L=K(\alpha)$ and let $h(z)=(z-\alpha)^{\mu}\left(\sum_{j\geq0}a_j(z-\alpha)^j\right)\in (z-\alpha)^{\mu}\mathcal{O}_{K}[[z-\alpha]]$ with $\mu\in\mathbb{Z}_{(p)}$ and $|a_0|=1$. Suppose that $\alpha$ is a regular singular point of $\mathcal{L}_{\mid\mathscr{M}_K^t}$ for some  integer $t\geq0$. If $h(z)\bmod\pi_L^t\mathcal{O}_L[[z-\alpha]]$  is solution of $\mathcal{L}_{\mid\mathscr{M}_L^{t}}$ then $\mu\bmod\pi^t_L$ is an exponent of $\mathcal{L}_{\mid\mathscr{M}_K^t}$ at $\alpha$, where $\pi_L$ is a uniformizer of $L$.
 \end{lemm}
 \begin{proof}
By assumption, for all $1\leq j\leq n$, $(z-\alpha)^jb_j(z)=F_j(z)\bmod\mathscr{M}_L^t\mathcal{O}_{L(z)}$, where $F_j\in L(z)\cap\mathcal{O}_L[[z-\alpha]]$. Notice that $(z-\alpha)$ is a unit in $\mathcal{O}_{L(z)}$ because $|\alpha|=1$. Then, $b_j(z)=\frac{F_j(z)}{(z-\alpha)^j}\bmod\mathscr{M}_L^t\mathcal{O}_{L(z)}$. By assumption again, $h(z)\bmod\pi_L^t$ is solution of $\mathcal{L}_{\mid\mathscr{M}_L^t}$. Then,  
 \begin{equation}\label{eq_reduction_m}
 h^{(n)}+\frac{F_{1}}{(z-\alpha)}h^{(n-1)}+\cdots+\frac{F_n}{(z-\alpha)^n}h(z)=0\bmod\pi^t_L\mathcal{O}_{L}\left[\left[z-\alpha,(z-\alpha)^{\mu},\frac{1}{z-\alpha}\right]\right].
 \end{equation}
 It is clear that $h(z)\bmod\pi^t_L=\overline{a_0}(z-\overline{\alpha})^{\mu}+\overline{a_1}(z-\overline{\alpha})^{\mu+1}+\cdots+\overline{a}_j(z-\overline{\alpha})^{\mu+j}+\cdots+$, where for any $x\in\mathcal{O}_{L}$, $\overline{x}$ denotes the reduction of $x$ modulo $\pi_L^t$. It follows from Equation~\eqref{eq_reduction_m} that $$a_0[\mu(\mu-1)\cdots(\mu-n+1)+s_1\mu(\mu-1)\cdots(\mu-n+2)+\cdots+s_{n-1}\mu+s_n]=0\bmod\pi_L^t.$$
 But $a_0\bmod\pi^t_L\mathcal{O}_L$ is a unit of the ring  $\mathcal{O}_L\big/\pi^t_L$ because $|a_0|=1$. So $\mu\bmod\pi^t_L$ is a root of $P_{\alpha,t}(X)$.
 
  \end{proof}

\section{A refined version of Theorem~\ref{theo_alg_ind}}\label{sec_rifi}

The purpose of this section is to prove Theorem~\ref{theo_power}, which, under certain assumptions, provides more detailed information than Theorem~\ref{theo_alg_ind}. Furthermore, this theorem is crucial in the proof of Theorem~\ref{theo_criterion_alg_ind}.

\begin{theo}\label{theo_power}
Let $K$ be a Frobenius field and let $f_1(z),\ldots, f_m(z)$ be in $\mathcal{M}\mathcal{F}(K)$. Let us suppose that, for every $1\leq i\leq m$, there exists $D_{\alpha_{c_i}}\in\textbf{Pol}(f'_i(z)/f_i(z))$ such that $D_{\alpha_{c_i}}\notin\textbf{Pol}(f'_j(z)/f_j(z))$ for all $j\in\{1,\ldots, m\}\setminus\{i\}$ and that $res\left(\frac{f'_i(z)}{f_i(z)}, D_{\alpha_{c_i}}\right)\neq0$. If $f_{1}(z),\ldots, f_m(z)$ are algebraically dependent over $E_K$ then there exist $d_1,\ldots, d_m\in\mathbb{Z}$ not all zero, $\mathfrak{a}_1(z)\in F(z)$ and $\mathfrak{j}_1(z)\in\mathcal{O}_F[[z]]$, $F$ a finite extension of $K$, such that
\begin{enumerate}[label=(\roman*)]
\item \begin{equation}\label{eq_factor}
f^{d_1}_1(z)\cdots f^{d_m}_m(z)=\prod_{i=1}^m(z-\alpha_{c_i})^{\Gamma_i}\mathfrak{a}_1(z)\mathfrak{j}_1(z),
\end{equation}
 where $\Gamma_i=d_i\cdot res\left(\frac{f'_i(z)}{f_i(z)}, D_{\alpha_{c_i}}\right)\in\mathbb{Z}$.
\end{enumerate}
Moreover,
\begin{enumerate}
\item [(ii)] there exists $d_s\in\{d_1,\ldots, d_m\}$ such that $p$ does not divide $d_s$,
\item [(iii)] $|\mathfrak{a}_1(z)|_{\mathcal{G}}=1$ and, for every $1\leq i\leq m$, $\mathfrak{a}_1(z)\in E(D_{\alpha_{c_i}})$ and $\mathfrak{a}_1(\alpha_{c_i})\neq0$, and all the poles and zeros of $\mathfrak{a}_1(z)$ have norm 1. 
\item [(iv)] $|\mathfrak{j}_1(z)|=1$, $\mathfrak{j}_1(z)=\mathfrak{j}_1(0)\bmod\pi_F\mathcal{O}_F[[z]]$, and $\mathfrak{j}'(z)$ belongs to $\pi_F^2\mathcal{O}_{F}[[z]]$,
\item [(v)] there is an integer $N>0$ such that $(f^{d_1}_1(z)\cdots f^{d_m}_m(z))^N\in E_{0,K}$.
\end{enumerate}

\end{theo}

The proof of this theorem is derived from Theorems~\ref{theo_alg_ind} and \ref{theo_key}. The last one is a direct consequence of Theorems~59.8 and 59.11 of \cite{E95}. To state it properly, we need to recall the notion of a \emph{strongly copiercing sequence associated to $D_0$}. Following \cite[p. 160]{E95}, we say that an infinite sequence $(a_n,b_n)_{n\geq0}$ is a strongly copiercing sequence associated to $D_0$ if, for all $n\geq0$, $a_n,b_n\in\mathbb{C}_p\setminus D_0$, $|a_n-b_n|<\delta(a_n, D_0)$\footnote{$\delta(a,D_0)=\inf\{|a-x| : x\in D_0\}$.} and $\lim_{n\rightarrow\infty}\frac{|a_n-b_n|}{\delta(a_n,D_0)}=0$.

\begin{rema}\label{rem_copiercing}
If $(a_n,b_n)_{n\geq0}$ is a strongly copiercing sequence associated to $D_0$ then $\mathfrak{t}(z)=\prod_{i=0}^{\infty}\left(\frac{z-a_n}{z-b_n}\right)$ belongs to $E_{0,p}$, $|\mathfrak{t}(z)-1|_{\mathcal{G}}<1$, and $|\mathfrak{t}(0)|=1$. In order to prove that, let us consider $\mathfrak{t}_r(z)=\prod_{n=1}^r\left(\frac{z-a_n}{z-b_n}\right)$ for every $r\geq1$. Then $$\mathfrak{t}_r(z)-\mathfrak{t}_{r+1}(z)=\mathfrak{t}_r(z)\left(1-\frac{z-a_{r+1}}{z-b_{r+1}}\right).$$
 Since, for all $n\geq1$, $|a_n|\geq1$ and the norm is non\nobreakdash-Archimedean, we get that $\delta(a_n,D_0)=|a_n|$ and for this reason $|a_n-b_n|<|a_n|$ for all $n\geq1$. So, using the fact that norm is non\nobreakdash-Archimedean again, we get that $|b_n|=|a_n|$ for all $n\geq1$. Hence, for every $r\geq1$, 
 \begin{equation}\label{eq_norme}
 \left|1-\frac{z-a_{r}}{z-b_{r}}\right|_{\mathcal{G}}=\frac{|a_{r}-b_{r}|}{|b_{r}|}=\frac{|a_{r}-b_{r}|}{\delta(a_{r},D_0)}<1.
 \end{equation}
 As $\lim_{n\rightarrow\infty}\frac{|a_n-b_n|}{\delta(a_n,D_0)}=0$ then $\lim_{r\rightarrow\infty}\left|1-\frac{z-a_{r}}{z-b_{r}}\right|_{\mathcal{G}}=0$. Whence, $\mathfrak{t}(z)$ is the limit of the rational functions $\mathfrak{t}_r(z)$. Note that this rational function does have poles in the $D_0$ because $b_n\notin D_0$ for all $n\geq1$. Further, it follows from Equation~\eqref{eq_norme} that, for all $n\geq1$, $\frac{z-a_n}{z-b_n}=1\bmod\mathscr{M}\mathcal{O}_{\mathbb{C}_p(z)}$, where $\mathscr{M}$ is the maximal ideal of $\mathcal{O}_{\mathbb{C}_p(z)}$. Thus, for every $r\geq1$, $\mathfrak{t}_r(z)=1\bmod\mathscr{M}\mathcal{O}_{\mathbb{C}_p(z)}$. Whence, $\mathfrak{t}(z)=1\bmod\mathscr{M}\mathcal{O}_{\mathbb{C}_p(z)}$ and consequently, $|\mathfrak{t}(z)-1|_{\mathcal{G}}<1$. Finally, by Equation~\eqref{eq_norme} again, we get that, for ever $r\geq1$, $\frac{a_{r}}{b_{r}}=1\bmod\mathfrak{m}\mathcal{O}_{\mathbb{C}_p}$ and hence $\mathfrak{t}_r(0)=1\bmod\mathfrak{m}\mathcal{O}_{\mathbb{C}_p}$ for every $r\geq1$. So $\mathfrak{t}(0)=1\bmod\mathfrak{m}\mathcal{O}_{\mathbb{C}_p}$. Whence, $|\mathfrak{t}(0)|=1$.
\end{rema}
We are now ready to state Theorem~\ref{theo_key}.
\begin{theo}[Theorems 59.8 and 59.11~\cite{E95}]\label{theo_key}
 Let $K$ be a finite extension of $\mathbb{Q}_p$ and $f(z)$ be in $1+z\mathcal{O}_K[[z]]$ such that $f'(z)/f(z)\in E_{0,K}$. Let $D_{\alpha_1},\ldots, D_{\alpha_q}$ be the poles of $f'(z)/f(z)$ such that $\mu_i=res\left(\frac{f'(z)}{f(z)}, D_{\alpha_i}\right)\neq0$ for all $1\leq i\leq q$. If $f(z)\in E_{0,K}$ then $\mu_i\in\mathbb{Z}$ and, for every $i\in\{1,\ldots, q\}$, there is $\beta_i\in D_{\alpha_i}$ such that
 \begin{equation}\label{eq_factorr}
 f(z)=\prod_{i=1}^q(z-\beta_i)^{\mu_i}\mathfrak{t}(z),
 \end{equation}
where  $\mathfrak{t}(z)=\prod_{i=1}^{\infty}\left(\frac{z-a_n}{z-b_n}\right)$ with $(a_n,b_n)_{n\geq1}$ a strongly copiercing sequence associated to $D_0$. Moreover, $\frac{f'(z)}{f(z)}-\sum_{i=1}^q\frac{\mu_i}{z-\beta_i}$ is integrable in $E_{0,p}$. That is, there exists $G\in E_{0,p}$ such that $G'(z)=\frac{f'(z)}{f(z)}-\sum_{i=1}^q\frac{\mu_i}{z-\beta_i}$.

 
\end{theo}

\begin{rema}
We would like to say that Theorem~59.8 of \cite{E95} is stated in a slightly different way. Let us be more precise. In \cite{E95}, the author considers $f'/f\in E(D)$, where $D$ is a well pierced set\footnote{Following \cite[p.106]{E95}, we say that a set $D\subset\mathbb{C}_p$ is well pierced if $\delta(\overline{D},\mathbb{C}_p\setminus D)>0$, where  $\overline{D}$ is the topological closure of $D$ and $\delta(\overline{D},\mathbb{C}_p\setminus D):=\inf\{|b-a|: b\in\mathbb{C}_p\setminus D, a\in\overline{D}\}$. For example, $D_0$ is well pierced because $\overline{D_0}=D_0$ and $\delta(D_0,\mathbb{C}_p\setminus D_0)=1$.}. Then, he shows that $f(z)$ is invertible in $E(D)$ if and only if $f(z)$ satisfies a decomposition of the shape \eqref{eq_factorr}. So, Theorem~\ref{theo_key} is a version of this theorem for $D=D_0$ because $D_0$ is a well pierced set and if $f(z)\in 1+z\mathcal{O}_K[[z]]\cap E_{0,K}$ then $f(z)$ is an invertible element of $E_{0,K}$ and thus $f(z)$ is invertible element of $E(D_0)$ because $E(D_0)=E_{0,p}$. Finally, the fact that $f'(z)/f(z)-\sum_{i=1}^q\frac{\mu_i}{z-\beta_i}$ is integrable in $E_{0,p}$ is a direct consequence of Theorem~59.8 and Theorem~59.11 of \cite{E95}.
\end{rema}

\subsection{Auxiliary lemma}

The next lemma is derived from Theorems~\ref{theo_alg_ind} and \ref{theo_key} and is fundamental in the proof of Theorem~\ref{theo_power}.

\begin{lemm}\label{lem_inicio}
Let the assumptions be as in Theorem~\ref{theo_power}. If $f_1(z),\ldots, f_m(z)$ are algebraically dependent over $E_K$ then there exist $a_1,\ldots, a_m\in\mathbb{Z}$, not all zero, such that $h(z)=f_1(z)^{a_1}\cdots f_m(z)^{a_m}\in E_{0,K}$ and
\begin{equation*}
h(z)=\mathfrak{q}(z)\mathfrak{a}(z)\mathfrak{j}(z)\text{ with }\mathfrak{q}(z)=\prod_{i=1}^m(z-\alpha_{c_i})^{\mu_i},\text{ }\mathfrak{a}(z)=\prod_{i=m+1}^q(z-\beta_i)^{\mu_i}\text{ and }\mathfrak{j}(z)\in\mathcal{O}_L[[z]],
\end{equation*}
where for all $1\leq i\leq m$, $\mu_i=res(h'/h, D_{\alpha_{c_i}})\in\mathbb{Z}^{*}$ and, for all $m+1\leq i\leq q$, $\mu_i=res(h'/h, D_{\beta_i})\in\mathbb{Z}^{*}$ and $L$ is a finite extension of $K$. 

Moreover, $\mathfrak{j}(z)=1\bmod\pi_{L}\mathcal{O}_{L}[[z]]$ and the norm of $\alpha_{c_1},\ldots, \alpha_{c_m},\beta_{m+1},\ldots, \beta_q$ is $1$.
\end{lemm}

To prove this theorem, we need to introduce the notion of the $a$-th root of a power series  $f(z)=\sum_{j\geq0}a_j(z-\alpha)^j\in\mathbb{C}_p[[z-\alpha]]$, where $\alpha\in\mathbb{C}_p$ and $a>0$ an integer. Let us suppose that $a_0\neq0$. Since $\mathbb{C}_p$ is algebraically closed, the polynomial $X^{a}-a_0$ has all its roots in $\mathbb{C}_p$. We denote by $a_{0}^{1/a}$ one of these roots. So, the roots of $X^{a}-a_0$ are $a_{0}^{1/a}\exp(\frac{2\pi i k}{a})$ with $k=0,1,\ldots, a-1$. We then define an $a$\nobreakdash-th root of $f(z)$ as follows $$f(z)^{1/a}:=a_0^{1/a}\left(\sum_{s\geq0}\binom{1/a}{s}\frac{1}{a_0^s}T^s\right),\text{ where }\binom{1/a}{s}=\frac{(1/a)(1/a-1)\cdots(1/a-s+1)}{s!}$$
and $T(z)=\sum_{j\geq1}a_j(z-\alpha)^j$. Then $f(z)^{1/a}=\sum_{j\geq0}b_j(z-\alpha)^j$, where $b_0=a_0^{1/a}$ and for any $j>0$, $$b_j\in \mathbb{Z}\left[a_1,\ldots,a_j,\binom{1/a}{1},\ldots,\binom{1/a}{j},\frac{1}{a_0},a_0^{1/a}\right].$$

Before proving Lemma~\ref{lem_inicio}, we present some properties of the $a$-th root of a power series, which will be useful throughout the remainder of the paper. Recall that, for any set $A\subset\mathbb{C}_p\cup\{\infty\}$, $E(A)$ is the the completion of the ring of the rational functions in $\mathbb{C}_p(z)$ that do not have poles in $A$\footnote{We say that $R(z)=P(z)/Q(z)\in\mathbb{C}_p(z)$ does not have pole at $\infty$ if $deg(P)\leq deg(Q)$.}. 
 
 
 \begin{lemma}\label{lemm_rational}
 (i) Let $f(z)=\sum_{n\geq0}a_n(z-\alpha)^n\in\mathcal{O}_{\mathbb{C}_p}[[z-\alpha]]$ with $|a_0|=1$. If $a$ is an integer not divisible by $p$ then $$f(z)^{1/a}=\sum_{n\geq0}b_n(z-\alpha)^n\in\mathcal{O}_{\mathbb{C}_p}[[z-\alpha]]\quad\text{ and }|b_0|=1.$$
 
 (ii) Let $P(z)\in\mathbb{C}_p(z)$ such that $P(z)\in E(D_{\alpha})$ with $\alpha\in\mathcal{O}_{\mathbb{C}_p}$. Suppose that the poles and zeros of $P(z)$ belong to $\mathcal{O}_{\mathbb{C}_p}$ and that $P(\alpha)\neq0$. Then $P(z)=\sum_{\geq0}a_n(z-\alpha)^n\in\mathcal{O}_L[[z-\alpha]]$ and $|a_0|=1$, where $L$ is a finite extension of $\mathbb{Q}_p$. Moreover, if $p$ does not divide $a$ then  $$P(z)^{1/a}=\sum_{n\geq0}b_n(z-\alpha)^n\in\mathcal{O}_F[[z-\alpha]]\quad\text{ and }\quad |b_0|=1,$$
 where $F$ is the finite extension of $L$ containing all the roots of $X^a-P(\alpha)$.

 \end{lemma}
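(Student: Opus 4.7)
The plan is to establish (i) by a direct calculation on the binomial series defining $f(z)^{1/a}$, and then derive (ii) by first producing an expansion of $P(z)$ around $\alpha$ with coefficients in $\mathcal{O}_L$ and applying (i) to it.

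For (i), the essential technical input is that $\binom{1/a}{s}\in\mathbb{Z}_p$ for every $s\geq 0$ whenever $p\nmid a$. This holds because $1/a\in\mathbb{Z}_p$ in that case, and the map $\lambda\mapsto\binom{\lambda}{s}$ is a polynomial in $\lambda$ over $\mathbb{Q}$ taking integer values on $\mathbb{Z}$; by continuity and density of $\mathbb{Z}$ in $\mathbb{Z}_p$, it takes values in $\mathbb{Z}_p$ on $\mathbb{Z}_p$. The hypothesis $|a_0|=1$ gives $|a_0^{1/a}|=1$ (since the $p$-adic absolute value is non-Archimedean, $|b_0|^a=|a_0|=1$ forces $|b_0|=1$) and $1/a_0\in\mathcal{O}_{\mathbb{C}_p}$. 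Writing $T(z)=\sum_{j\geq 1}a_j(z-\alpha)^j\in\mathcal{O}_{\mathbb{C}_p}[[z-\alpha]]$ with zero constant term, the coefficient of $(z-\alpha)^n$ in $\sum_{s\geq 0}\binom{1/a}{s}a_0^{-s}T(z)^s$ is a finite sum (indexed by $s\leq n$) of products of elements of $\mathcal{O}_{\mathbb{C}_p}$, hence lies in $\mathcal{O}_{\mathbb{C}_p}$. Multiplying by $a_0^{1/a}$ preserves integrality, giving $f(z)^{1/a}\in\mathcal{O}_{\mathbb{C}_p}[[z-\alpha]]$ with $b_0=a_0^{1/a}$ of norm $1$.

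For (ii), the first task is to obtain the expansion $P(z)=\sum_{n\geq 0}a_n(z-\alpha)^n$ with $a_n\in\mathcal{O}_L$ and $|a_0|=1$. Since $\mathbb{C}_p$ is algebraically closed, write $P(z)=c\prod_i(z-\beta_i)^{n_i}$ with $n_i\in\mathbb{Z}\setminus\{0\}$ and $\beta_i\in\mathcal{O}_{\mathbb{C}_p}$, and take $L$ to be the finite extension of $\mathbb{Q}_p$ containing $c$, $\alpha$, and all the $\beta_i$. For each factor with $n_i>0$, the binomial expansion of $((z-\alpha)+(\alpha-\beta_i))^{n_i}$ has coefficients in $\mathcal{O}_L$ because $|\alpha-\beta_i|\leq 1$. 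For factors with $n_i<0$, the hypothesis $P(z)\in E(D_\alpha)$ forces $\beta_i\notin D_\alpha$ (these are the poles), hence $|\alpha-\beta_i|=1$, and the geometric-series expansion produced by Lemma~\ref{lem_conv}(i) then gives $(z-\beta_i)^{n_i}\in\mathcal{O}_L[[z-\alpha]]$. Multiplying these expansions produces the desired form of $P(z)$, whose constant term is $a_0=P(\alpha)$ and satisfies $|a_0|=1$ under the stated hypotheses on the location of the zeros and poles of $P(z)$ relative to $D_\alpha$. The second claim then follows by applying (i) to this expansion, after enlarging scalars from $L$ to the finite extension $F$ generated by an $a$-th root $a_0^{1/a}=P(\alpha)^{1/a}$; then $|b_0|=|a_0^{1/a}|=|a_0|^{1/a}=1$.

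The main obstacle is the case analysis in (ii): one must carefully separate those $\beta_i$ that lie in $D_\alpha$ from those that do not, invoke the appropriate expansion from Lemma~\ref{lem_conv} in each case, and keep track of which finite extension $L$ contains all the data. The computation in (i) is then essentially formal, resting entirely on the $p$-adic integrality of the binomial coefficients $\binom{1/a}{s}$, and the passage from the expansion produced in (ii) back through (i) is immediate once the arithmetic control on the constant term is in hand.
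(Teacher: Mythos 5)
Your proposal is correct and follows essentially the same route as the paper's proof: part (i) rests on the $p$-adic integrality of $\binom{1/a}{s}$ when $p\nmid a$, and part (ii) expands each linear factor of $P(z)$ around $\alpha$ (the pole factors, having $|\alpha-\beta_i|=1$ because $P\in E(D_{\alpha})$, being units of $\mathcal{O}_L[[z-\alpha]]$), then applies (i) after adjoining an $a$-th root of $P(\alpha)$. The only cosmetic slip is the citation for the pole factors: the relevant case is Lemma~\ref{lem_conv}(ii) (the case $|\alpha-\beta|\geq 1$), not (i), or equivalently the paper's direct observation that $(z-\beta_i)^{\nu_i}$ has constant term of norm $1$ in $\mathcal{O}_L[[z-\alpha]]$ and is therefore invertible there.
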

 
 \begin{proof}
(i)  By definition, 
 $$f(z)^{1/a}=a_{0}^{1/a}\left(\sum_{s\geq0}\binom{1/a}{s}\frac{1}{a_0^s}T^s\right)=\sum_{j\geq0}b_j(z-\alpha)^j,$$
 where $b_0=a_0^{1/a}$ and for $j>0$, $$b_j\in \mathbb{Z}\left[a_1,\ldots,a_j,\binom{1/a}{1},\ldots,\binom{1/a}{j},\frac{1}{a_0},a_0^{1/a}\right].$$
 Since $p$ does not divide $a$,  $\binom{1/a}{s}\in\mathbb{Z}_p$  and we know that $|a_n|\leq1$ for any $n\geq0$. Thus, for all integers $n$, $|b_n|\leq 1$. Finally, as $b_0=a_0^{1/a}$ then $|b_0|=1$.
 
 (ii) Let us write $P(z)=\frac{\prod_{i=1}^n(z-\alpha_i)^{\mu_i}}{\prod_{j=1}^m(z-\beta_j)^{\nu_j}}$, where $\mu_1,\ldots,\mu_n,\nu_1,\ldots,\nu_m$ belong to $\mathbb{N}_{>0}$. Let $L=\mathbb{Q}_p(\alpha,\alpha_1,\ldots,\alpha_n,\beta_1,\ldots,\beta_m)$. By assumption, $\beta_1,\ldots,\beta_m\in\mathcal{O}_{\mathbb{C}_p}$ and $P(z)\in E(D_{\alpha})$. Then, Remark~\ref{rem_discos} implies that, for every $1\leq j\leq m$, $|\beta_j-\alpha|=1$. Similarly, by assumption, $\alpha_1,\ldots, \alpha_n\in\mathcal{O}_{\mathbb{C}_p}$ and $P(\alpha)\neq0$ and thus, Remark~\ref{rem_discos} implies that, for every $1\leq i\leq n$, $|\alpha_i-\alpha|=1$. Consequently, $|P(\alpha)|=1$. 
 
 Now, it is clear that, for all $(i,j)\in\{1,\ldots,n\}\times\{1,\ldots,m\}$, $$(z-\alpha_i)^{\mu_i}=\sum_{s=0}^{\mu_i}\binom{\mu_i}{s}(\alpha-\alpha_i)^s(z-\alpha)^{\mu_i-s}\in\mathcal{O}_L[z-\alpha]$$ $$(z-\beta_j)^{\nu_j}=\sum_{s=0}^{\nu_j}\binom{\nu_j}{s}(\alpha-\beta_j)^s(z-\alpha)^{\nu_j-s}\in\mathcal{O}_L[z-\alpha].$$ Since $|\alpha-\beta_j|=1$, we have $|\alpha-\beta_j|^{\nu_j}=1$. So, for all $1\leq j\leq m$, $(z-\beta_j)^{\nu_j}$ is a unit of the ring $\mathcal{O}_L[[z-\alpha]]$ and thus,  $\frac{1}{(z-\beta_j)^{\nu_j}}\in\mathcal{O}_L[[z-\alpha]]$. Consequently, $P(z)=\sum_{n\geq0}a_n(z-\alpha)^n\in\mathcal{O}_L[[z-\alpha]]$ and $|a_0|=1$ because $P(\alpha)=a_0$ and we have already seen that $|P(\alpha)|=1$. 
 
 Finally, from (i) we know that $P(z)^{1/a}=\sum_{n\geq0}b_n(z-\alpha)^n$, where, $b_0=a_0^{1/a}$ and for $j>0$, $b_j\in \mathbb{Z}\left[a_1,\ldots,a_j,\binom{1/a}{1},\ldots,\binom{1/a}{j},\frac{1}{a_0},a_0^{1/a}\right].$ Thus, $P(z)^{1/a}\in\mathcal{O}_F[[z-\alpha]]$ because $\binom{1/a}{s}\in\mathbb{Z}_p$ for all $s\geq0$ (recall that $p$ does not divide $a$) and we know that $a_n\in\mathcal{O}_L$ for any $n\geq1$. Finally,  $|b_0|=1$ because $b_0^{a}=a_0=P(\alpha)$ and we have already seen that $|P(\alpha)|=1$.
 \end{proof}
 From the previous proof, we have the following remark
 \begin{rema}\label{rem_a_p}
 Let $f(z)=\sum_{n\geq0}a_nz^n$ be in $\mathcal{O}_K[[z]]$, $K$ an extension of $\mathbb{Q}_p$, such that $|f|_{\mathcal{G}}=1=|a_0|$. It follows from the proof of previous lemma that if $p$ does not divide $a$ then $f(z)^{1/a}=\sum_{n\geq0}b_nz^n\in \mathcal{O}_L[[z]]$ and $|f^{1/a}|_{\mathcal{G}}=1=|b_0|$, where $L$ is a finite extension of $K$ containing all the roots of $X^a-a_0$.
 \end{rema}

\begin{rema}\label{rem_k_0}
Let $K$ be an extension of $\mathbb{Q}_p$ and let $P(z)$ be $K_0(z)$. If $|P(z)|_{\mathcal{G}}\leq1$ then $P(z)\in\mathcal{O}_K[[z]]$. Since $0$ is not a pole of $P(z)$, we have $P(z)=\sum_{n\geq0}a_nz^n$ with $a_n\in K$ for all $n\geq0$. But, $|a_n|\leq 1$ for all $n\geq0$ because  $|P(z)|_{\mathcal{G}}\leq1$ and thus, $1\geq\sup\{|a_n|\}_{n\geq0}$.
\end{rema}

\subsection{Proof of Lemma~\ref{lem_inicio}}
\begin{proof}
  By Theorem~\ref{theo_alg_ind}, there exist $a_1,\ldots, a_m\in\mathbb{Z}$, not all zero, such that $$h(z):=f_1(z)^{a_1}\cdots f_m(z)^{a_m}\in E_{0,K}.$$
Notice that $h(z)\in 1+z\mathcal{O}_K[[z]]$ because, by (i) of Theorem~\ref{theo_analytic_element}, for all $1\leq i\leq m$, $f_i(z)\in 1+z\mathcal{O}_K[[z]]$. Further, by (iii) of Theorem~\ref{theo_analytic_element}, we know that, for all $1\leq i\leq m$, $f'_i(z)/f_i(z)\in E_{0,K}$ and thus, $$\frac{h'(z)}{h(z)}=\sum_{i=1}^ma_i\frac{f'_i(z)}{f_i(z)}\in E_{0,K}.$$
Further, $h'(z)/h(z)$ is bounded because, for all $1\leq i\leq m$, $f_i(z)\in 1+z\mathcal{O}_K[[z]]$. Let $D_{\alpha_1},\ldots, D_{\alpha_q}$ be the poles of $h'(z)/h(z)$ such that $res\left(\frac{h'(z)}{h(z)},D_{\alpha_i}\right)\neq0$ for all $1\leq i\leq q$. Notice $D_0$ is not a pole of $h'(z)/h(z)$ because $h'(z)/h(z)\in E_{0,K}$. Then, by Equation~\eqref{eq_mittag}, we conclude that, for all $1\leq i\leq q$, $|\alpha_i|=1$.

 Given that $h(z)\in E_{0,K}$, from Theorem~\ref{theo_key} we deduce that, there are $\beta_1,\ldots, \beta_q$ such that $\beta_i\in D_{\alpha_i}$ and 
 \begin{equation}\label{eq_h_deco1}
 h(z)=\prod_{i=1}^q(z-\beta_i)^{\mu_i}\mathfrak{t}(z)\text{ with }\mu_i=res\left(\frac{h'}{h},D_{\alpha_i}\right)\in\mathbb{Z},
 \end{equation}
$\mathfrak{t}(z)=\prod_{i=0}^{\infty}\left(\frac{z-a_n}{z-b_n}\right)$ with $(a_n,b_n)_{n\geq0}$ a strongly copiercing sequence associated to $D_0$. In addition, by Remark~\ref{rem_copiercing}, we know that $\mathfrak{t}(z)$ belongs to $E_{0,p}$, $|\mathfrak{t}(z)-1|_{\mathcal{G}}<1$, and $|\mathfrak{t}(0)|=1$. 

Now, by assumption, for every $1\leq i\leq m$, we have $D_{\alpha_{c_i}}\in\textbf{Pol}(f'_i(z)/f_i(z))$ and $D_{\alpha_{c_i}}\notin\textbf{Pol}(f'_j(z)/f_j(z))$ for every $j\neq i$. Thus, it follows that $D_{\alpha_{c_i}}$ is a pole of $h'(z)/h(z)$ and $$res\left(\frac{h'(z)}{h(z)},D_{\alpha_{c_i}}\right)=a_i\cdot res\left(\frac{f'_i(z)}{f_i(z)},D_{\alpha_{c_i}}\right)$$
for every $1\leq i\leq m$.

By assumption, again, we know that, for all $i\in\{1,\ldots, m\}$, $res\left(\frac{f'_i(z)}{f_i(z)},D_{\alpha_{c_i}}\right)\neq0$. Further, we know that $a_j\neq0$ for some $j\in\{1,\ldots, m\}$. So,  $res\left(\frac{h'(z)}{h(z)},D_{\alpha_{c_j}}\right)\neq0$. Thus, we can then suppose without loss of generality that $res\left(\frac{h'(z)}{h(z)},D_{\alpha_{c_i}}\right)\neq0$ for all $1\leq i\leq m$. We can also suppose that $D_{\alpha_1}=D_{\alpha_{c_1}},\ldots, D_{\alpha_m}=D_{\alpha_{c_m}}$. 

Now, for every $1\leq i\leq q$,  $|\alpha_i|=1$. Moreover, since $\alpha_{c_i}\in D_{\alpha_i}$ for every $1\leq i\leq m$, Remark~\ref{rem_discos} we have $|\alpha_{c_i}-\alpha_i|<1$. Then, using the non-Archimedean property, we deduce that $|\alpha_{c_i}|=1$. Similarly, we prove that $|\beta_i|=1$ for all $1\leq i\leq q$.
 
Theorem~\ref{theo_key} aslo implies that $\frac{h'(z)}{h(z)}-\sum_{i=1}^q\frac{\mu_i}{z-\beta_i}$ is integrable in $E_{0,p}$. Consequently, according to \cite[Lemma 59.2]{E95}, $\frac{h'(z)}{h(z)}-\sum_{i=1}^m\frac{\mu_i}{z-\alpha_{c_i}}-\sum_{i=m+1}^q\frac{\mu_i}{z-\beta_i}$ is also integrable in $E_{0,p}$ and thus, according to Theorem~59.11 of \cite{E95}, there exits an integer $e\geq0$ such that 
 \begin{equation}\label{eq_deco_h'_h}
 \frac{h'(z)}{h(z)}-\sum_{i=1}^m\frac{\mu_i}{z-\alpha_{c_i}}-\sum_{i=m+1}^q\frac{\mu_i}{z-\beta_i}=\frac{1}{p^e}\left(\sum_{i=1}^{\infty}\frac{1}{z-c_n}-\frac{1}{z-d_n}\right),
 \end{equation}
 where $(c_n,d_n)_{n\geq1}$ is a strongly copiercing sequence associated to $D_0$. So, by Remark~\ref{rem_copiercing}, we have $\mathfrak{o}(z)=\prod_{n=1}^{\infty}\left(\frac{z-c_n}{z-d_n}\right)\in E_{0,p}$ and $|\mathfrak{o}(0)|=1$. 
 
Consequently, Equation~\eqref{eq_deco_h'_h} implies that there exists  $\lambda\in\mathbb{C}_p$ such that
\begin{equation}\label{eq_h_deco22}
h(z)=\mathfrak{q}(z)\mathfrak{a}(z)\mathfrak{j}(z)\text{ with }\mathfrak{q}(z)=\prod_{i=1}^m(z-\alpha_{c_i})^{\mu_i},\text{ }\mathfrak{a}(z)=\prod_{i=m+1}^q(z-\beta_i)^{\mu_i}\text{ and }\mathfrak{j}(z)=\lambda(\mathfrak{o}(z))^{1/p^e},
\end{equation}
which is the desired decomposition. 

Let $L$ be the field $K(\alpha_{c_1},\ldots,\alpha_{c_m},\beta_1,\ldots,\beta_q)$. We proceed to show that $\mathfrak{j}(z)$ belongs to $\mathcal{O}_{L}[[z]]$. Indeed, we know that $h(z)\in 1+z\mathcal{O}_K[[z]]$ and it is clear that $\mathfrak{q}(z)\mathfrak{a}(z)\in L(z)$. However, we know that $|\alpha_{c_i}|=1$ for any $1\leq i\leq m$ and $|\beta_i|=1$ for any $m+1\leq i\leq q$. Thus, we get that $|\mathfrak{q}(z)\mathfrak{a}(z)|_{\mathcal{G}}=1$ and that $\mathfrak{q}(z)\mathfrak{a}(z)\in E(D_0)$. Hence, by Remark~\ref{rem_k_0}, $\mathfrak{q}(z)\mathfrak{a}(z)\in\mathcal{O}_{L}[[z]]$ and since $|\mathfrak{q}(0)\mathfrak{a}(0)|=|\prod_{i=1}^m(-\alpha_{c_i})^{\mu_i}||\prod_{i=m+1}^q(-\beta_{i})^{\mu_i}|=1$, we get that $\mathfrak{q}(z)\mathfrak{a}(z)$ is a unit of $\mathcal{O}_{L}[[z]]$.  As consequence, it follows from \eqref{eq_h_deco22} that $\mathfrak{j}(z)$ belongs to $\mathcal{O}_{L}[[z]]$.
 
We finally show that $\mathfrak{j}(z)=1\bmod\pi_{L}\mathcal{O}_{L}[[z]]$. We prove, by using Equation~\eqref{eq_h_deco1}, in a similar way as above that $\mathfrak{t}(z)$ belongs to $\mathcal{O}_{L}[[z]]$. Since $|\mathfrak{t}(z)-1|_{\mathcal{G}}<1$, we have $\mathfrak{t}(z)=1\bmod\pi_{L}\mathcal{O}_{L}[[z]]$. So, by reducing Equation~\eqref{eq_h_deco1} modulo $\pi_{L}$ we get $h(z)=\prod_{i=1}^q(z-\beta_i)^{\mu_i}\bmod\pi_{L}\mathcal{O}_{L}[[z]]$. In addition, for all $1\leq i\leq m$, $|\beta_i-\alpha_{c_i}|<1$ and thus, $\prod_{i=1}^q(z-\beta_i)^{\mu_i}=\prod_{i=1}^m(z-\alpha_{c_i})^{\mu_i}\prod_{i=m+1}^q(z-\beta_i)^{\mu_i}\bmod\pi_{L}\mathcal{O}_{L}[[z]]$. So, by reducing Equation~\eqref{eq_h_deco22} modulo $\pi_{L}$ we conclude that $\mathfrak{j}(z)=1\bmod\pi_{L}\mathcal{O}_{L}[[z]]$. 
 \end{proof}
 
 We are now ready to prove Theorem~\ref{theo_power}.
 
 \subsection{Proof of Theorem~\ref{theo_power}}

\textit{Proof of (i)} 
It follows from Lemma~\ref{lem_inicio} that there exist $a_1,\ldots, a_m\in\mathbb{Z}$, not all zero, such that $h(z)=f_1(z)^{a_1}\cdots f_m(z)^{a_m}\in E_{0,K}$ and
\begin{equation}\label{eq_h_deco2}
h(z)=\mathfrak{q}(z)\mathfrak{a}(z)\mathfrak{j}(z)\text{ with }\mathfrak{q}(z)=\prod_{i=1}^m(z-\alpha_{c_i})^{\mu_i},\text{ }\mathfrak{a}(z)=\prod_{i=m+1}^q(z-\beta_i)^{\mu_i}\text{ and }\mathfrak{j}(z)\in\mathcal{O}_L[[z]],
\end{equation}
where for all $1\leq i\leq m$, $\mu_i=res(h'/h, D_{\alpha_{c_i}})\in\mathbb{Z}^{*}$ and, for all $m+1\leq i\leq q$, $\mu_i=res(h'/h, D_{\beta_i})\in\mathbb{Z}^{*}$ and $L$ is a finite extension of $K$. Morevoer, we also have $\mathfrak{j}(z)=1\bmod\pi_{L}\mathcal{O}_{L}[[z]]$ and the norm of $\alpha_{c_1},\ldots,\alpha_{c_m},\beta_{m+1},\ldots, \beta_q$ is $1$.

 For every $1\leq i\leq m$, we set $D_{\alpha_i}:=D_{\alpha_{c_i}}$ and for all $m+1\leq i\leq q$, we set $D_{\alpha_i}:=D_{\beta_i}$. We put $s=\min\{v_p(a_1),\ldots, v_p(a_m)\}$ and let $F$ be the field $L(\zeta_1,\ldots,\zeta_{p^s}, \alpha_{c_1},\ldots,\alpha_{c_m},\beta_1,\ldots,\beta_q)$, where $\zeta_1,\ldots, \zeta_{p^s}$ are the $p^s$\nobreakdash-th roots of unity.

For every $i\in\{1,\ldots, m\}$, we put $d_i=\frac{a_i}{p^s}$. Then $d_i\in\mathbb{Z}$ for all $i\in\{1,\ldots, m\}$ because, by definition, $s=\min\{v_p(a_1),\ldots, v_p(a_m)\}$.  We then put $$h_1(z)=f_1(z)^{d_1}\cdots f_m(z)^{d_r}.$$
Now, for all $1\in\{1,\ldots, q\}$, we put $\Gamma_i=\frac{\mu_i}{p^s}$. Notice that, for all $1\leq i\leq q$, $$\Gamma_i=\sum_{j=1}^m\frac{a_j}{p^s}\cdot res\left(\frac{f'_i(z)}{f_i(z)},D_{\alpha_{i}}\right)=\sum_{j=1}^md_j\cdot res\left(\frac{f'_j(z)}{f_j(z)},D_{\alpha_{i}}\right)=res\left(\frac{h'_1(z)}{h_1(z)},D_{\alpha_{i}}\right),$$

By assumption, we know that, for all $1\leq i\leq m$, $D_{\alpha_{c_i}}\in\textbf{Pol}(f'_i(z)/f_i(z))$ and that $D_{\alpha_{c_i}}\notin\textbf{Pol}(f'_j(z)/f_j(z))$ for all $j\neq i$. Then $$\Gamma_i=d_i\cdot res\left(\frac{f'_i(z)}{f_i(z)}, D_{\alpha_{c_i}}\right)$$
for all $1\leq i\leq m$.

 We now prove that, for any $1\leq i\leq q$,  $\Gamma_i$ belongs to $\mathbb{Z}$. By Theorem~\ref{theo_analytic_element}, we know that $f_1(z),\ldots, f_m(z)$ belong to $1+z\mathcal{O}_K[[z]]$ and, by Theorem~\ref{theo_analytic_element} again, we also know that  $f'_1(z)/f_1(z)$, $\ldots, f'_m(z)/f_m(z)$ belong to $E_{0,K}$. So, we are able to apply Lemma~\ref{lem_res} and thus, we conclude that $res\left(\frac{f'_j(z)}{f_j(z)},D_{\alpha_{i}}\right)$ belongs to $\mathbb{Z}_p$ for all $(i,j)\in\{1,\ldots,q\}\times\{1,\ldots, m\}$. Then $\Gamma_i\in\mathbb{Z}_p$ because $d_j\in\mathbb{Z}$ for all $1\leq j\leq m$. As $\mu_i/p^s=\Gamma_i$ and $\mu_i\in\mathbb{Z}$, and $\Gamma_i\in\mathbb{Z}_p$ then $p^{s}$ divides $\mu_i$ and thus, we get that $\Gamma_i\in\mathbb{Z}$.

So, Equation~\eqref{eq_h_deco2} gives us
\begin{equation}\label{eq_deco_h1}
h_1(z)=\zeta\mathfrak{q}_1(z)\mathfrak{a}_1(z)\mathfrak{j}_1(z)\text{ with }\mathfrak{q}_1(z)=\prod_{i=1}^m(z-\alpha_{c_i})^{\Gamma_i},\text{ }\mathfrak{a}_1(z)=\prod_{i=m+1}^q(z-\beta_i)^{\Gamma_i} \text{ and }\mathfrak{j}_1(z)=(\mathfrak{j}(z))^{1/p^{s}},
\end{equation}
where $\zeta$ is a $p^s$\nobreakdash-th root of unity.

 
It is clear that $\mathfrak{q}_1(z)$ and $\mathfrak{a}_1(z)$ belong to $F(z)$. We now prove that $\mathfrak{q}_1(z)$ belongs to $\mathcal{O}_{F}[[z]]$ and that it is a unit of this ring. Indeed, we know that $|\alpha_{c_i}|=1$ for any $1\leq i\leq m$ and therefore, $|\mathfrak{q}_1(z)|_{\mathcal{G}}=1=|\mathfrak{q}_1(0)|$. Further, we have $\mathfrak{q}_1(z)\in E(D_0)$ because $|\alpha_{c_i}|=1$ for all $1\leq i\leq m$. So, Remark~\ref{rem_k_0} implies that $\mathfrak{q}_1(z)\in\mathcal{O}_{F}[[z]]$ and given that $|\mathfrak{q}_1(0)|=1$ then $\mathfrak{q}_1(z)$ is a unit of $\mathcal{O}_{F}[[z]]$. In a similar fashion, we prove that $\mathfrak{a}_1(z)$ belongs to $\mathcal{O}_{F}[[z]]$ and that it is a unit of this ring. We also have $h_1(z)\in1+z\mathcal{O}_{F}[[z]]$ because $f_1(z),\ldots, f_m(z)\in 1+z\mathcal{O}_K[[z]]$ and for all $1\leq i\leq m$, $d_i\in\mathbb{Z}$. Therefore, from Equation~\eqref{eq_deco_h1}, we obtain $$\mathfrak{j}_1(z)=\frac{h_1(z)}{\zeta\mathfrak{q}_1(z)\mathfrak{a}_1(z)}\in\mathcal{O}_{F}[[z]]\text{ and } |\mathfrak{j}_1(z)|_{\mathcal{G}}=1=|\mathfrak{j}_1(0)|.$$
Whence, $\mathfrak{j}_1(z)$ is a unit of  $\mathcal{O}_{F}[[z]]$.

\textit{Proof of (ii)} It is clear, by construction, that $p$ does not divide $d_s$.

\textit{Proof of (iii)} Notice that $|\mathfrak{a}_1(z)|_{\mathcal{G}}=1$ because for all $m+1\leq i\leq q$, $|\beta_i|=1$. Since the poles and zeros of $\mathfrak{a}_1(z)$ are in $\{\beta_{m+1},\ldots,\beta_q\}$, we get that all the poles and zeros of $\mathfrak{a}_1(z)$ have norm 1. Further, for all $1\leq i\leq m$, $\beta_{m+1},\ldots, \beta_q$ do not belong to $D_{\alpha_{c_i}}$. So, for every $1\leq i\leq m$, $\mathfrak{a}_1(z)\in E(D_{\alpha_{c_i}})$ and $\mathfrak{a}_1(\alpha_{c_i})\neq0$. 

\textit{Proof of (iv)} We have already seen that $|\mathfrak{j}_1(0)|=1$. We now prove that $\mathfrak{j}_1(z)=\mathfrak{j}_1(0)\bmod\pi_{F}\mathcal{O}_{F}[[z]]$. Let us write $\mathfrak{j}_1(z)=\sum_{j\geq0}\kappa_jz^{j}$. It is clear that $\mathfrak{j}(z)=\mathfrak{j}_1(z)^{p^{s}}$ and that $\mathfrak{j}_1(z)^{p^{s}}=\sum_{j\geq0}\kappa_j^{p^{s}}z^{jp^{s}}\bmod\pi_{F}\mathcal{O}_{F}[[z]]$. But we know that $\mathfrak{j}(z)=1\bmod\pi_{L}\mathcal{O}_{L}[[z]]$ and thus $\kappa_j\in\pi_{L}\mathcal{O}_{L}$ for all $j\geq1$.

Finally, we proceed to prove that $\mathfrak{j}'_1(z)\in\pi_F^2\mathcal{O}_{F}[[z]]$. Since  $\mathfrak{j}_1(z)=\mathfrak{j}_1(0)\bmod\pi_{F}\mathcal{O}_{F}[[z]]$, by reducing Equation~\eqref{eq_deco_h1} modulo $\pi_{F}$, we have $$h_1(z)=\zeta\mathfrak{j}_1(0)\mathfrak{q}_1(z)\mathfrak{a}_1(z)\bmod\pi_{F}\mathcal{O}_{F}[[z]].$$ Notice that $\mathfrak{j}_1(0)$ is a unit of $\mathcal{O}_{F}$ because $|\mathfrak{j}_1(0)|=1$. In addition, we have already seen that $\mathfrak{q}_1(z)$ and $\mathfrak{a}_1(z)$ are units of $\mathcal{O}_{F}[[z]]$ but $h_1(z)$ is also a unit of this ring because $h_1(z)\in 1+z\mathcal{O}_K[[z]]$. So, from the equality $h_1(z)=\zeta\mathfrak{j}_1(0)\mathfrak{q}_1(z)\mathfrak{a}_1(z)\bmod\pi_{F}\mathcal{O}_{F}[[z]]$, we obtain 
\begin{equation}\label{eq_red_pi_l}
\frac{h_1'(z)}{h_1(z)}=\frac{\mathfrak{q}_1'(z)}{\mathfrak{q}_1(z)}+\frac{\mathfrak{a}'_1(z)}{\mathfrak{a}_1(z)}\bmod\pi_{F}\mathcal{O}_{F}[[z]]
\end{equation}
Let us show that $\frac{\mathfrak{j}'_1(z)}{\mathfrak{j}_1(z)}$ belongs to $E_{0,p}$. From Equation~\eqref{eq_deco_h1}, we deduce that $$\frac{\mathfrak{j}'_1(z)}{\mathfrak{j}_1(z)}=\frac{h'_1(z)}{h_1(z)}-\frac{\mathfrak{q}'_1(z)}{\mathfrak{q}_1(z)}-\frac{\mathfrak{a}'_1(z)}{\mathfrak{a}_1(z)}.$$
Given that $\frac{h'_1(z)}{h_1(z)}=\sum_{i=1}^md_i\frac{f'_i(z)}{f_i(z)}$ and $\frac{f'_i(z)}{f_i(z)}\in E_{0,K}$ for all $1\leq i\leq m$, we have $\frac{h'_1(z)}{h_1(z)}\in E_{0,K}$. Further, $\frac{\mathfrak{q}'_1(z)}{\mathfrak{q}_1(z)}=\sum_{i=1}^m\frac{\Gamma_i}{z-\alpha_{c_i}}$ and since $|\alpha_{c_i}|=1$ for all $1\leq i\leq m$, we get $\frac{\mathfrak{q}'_1(z)}{\mathfrak{q}_1(z)}\in E_{0,p}$. In a similar way, we show that $\frac{\mathfrak{a}'_1(z)}{\mathfrak{a}_1(z)}\in E_{0,p}$. Whence, $\mathfrak{j}'_1(z)/\mathfrak{j}_1(z)$ belongs to $E_{0,p}$. 

Now, as $\mathfrak{j}_1(z)\in\mathcal{O}_{F}[[z]]$ and $\mathfrak{j}'_1(z)/\mathfrak{j}_1(z)\in E_{0,p}$,  it follows from Theorem~3.1 of \cite{R74} that there exits $R_2(z)\in\mathbb{C}_p(z)$ such that $R_2(z)$ has no poles or zeros in $D_0$ and
\begin{equation}\label{eq_j_1}
\left|\frac{\mathfrak{j}'_1(z)}{\mathfrak{j}_1(z)}-\frac{R'_2(z)}{R_2(z)}\right|_{\mathcal{G}}<1/p^2.
\end{equation}

In consequence
 \begin{equation}\label{eq_h'}
 \frac{h_1'(z)}{h_1(z)}=\frac{\mathfrak{q}_1'(z)}{\mathfrak{q}_1(z)}+\frac{\mathfrak{a}'_1(z)}{\mathfrak{a}_1(z)}+\frac{R'_2(z)}{R_2(z)}+g_2
 \end{equation}
 with $g_2\in E_{0,p}$ and $|g_2|_{\mathcal{G}}<1/p^2$. 
 
 Let $M$ be the extension of $F$ obtained by adding the zeros and poles of $R_2(z)$. Let us show that $R'_2(z)/R_2(z)$ belongs to $\mathcal{O}_M[[z]]$. It is clear that $R'_2(z)/R_2(z)\in M(z)$. Further, $\mathfrak{j}_1'(z)/\mathfrak{j}_1(z)\in\mathcal{O}_{M}[[z]]$ because we know that $\mathfrak{j}_1(z)$ is a unit of the ring $\mathcal{O}_{F}[[z]]$. So, we deduce, from Equation~\eqref{eq_j_1}, that $$\left|\frac{R'_2(z)}{R_2(z)}\right|_{\mathcal{G}}=\left|\frac{R'_2(z)}{R_2(z)}-\frac{\mathfrak{j}'_1(z)}{\mathfrak{j}_1(z)}+\frac{\mathfrak{j}'_1(z)}{\mathfrak{j}_1(z)}\right|_{\mathcal{G}}\leq\max\left\{\left|\frac{\mathfrak{j}'_1(z)}{\mathfrak{j}_1(z)}-\frac{R'_2(z)}{R_2(z)}\right|_{\mathcal{G}}, \left|\frac{\mathfrak{j}'_1(z)}{\mathfrak{j}_1(z)}\right|_{\mathcal{G}}\right\}\leq1.$$
 Further,  $R'_2(z)/R_2(z)$ belongs to $E(D_0)$ because $D_0$ is not a pole of $h'_1(z)/h_1(z)$ (recall that $h'_1(z)/h_1(z)\in E_{0,K})$. Then, by Remark~\ref{rem_k_0},  $R'_2(z)/R_2(z)$ belongs $\mathcal{O}_M[[z]]$.  
 
 Consequently, it follows from Equation~\eqref{eq_h'} that $g_2\in\mathcal{O}_M[[z]]$ and since $|g_2|_{\mathcal{G}}<1/p^2$, $g_2\in\pi^2_M\mathcal{O}_M[[z]]$. So, reducing Equation~\eqref{eq_h'} modulo $\pi^2_M$, we have 
 \begin{equation}\label{eq_h_1_red_2}
 \frac{h_1'(z)}{h_1(z)}=\frac{\mathfrak{q}_1'(z)}{\mathfrak{q}_1(z)}+\frac{\mathfrak{a}'_1(z)}{\mathfrak{a}_1(z)}+\frac{R'_2(z)}{R_2(z)}\bmod\pi_M^2\mathcal{O}_M[[z]].
 \end{equation}
  From Equation~\eqref{eq_red_pi_l}, we know that $\frac{h_1'(z)}{h_1(z)}=\frac{\mathfrak{q}_1'(z)}{\mathfrak{q}_1(z)}+\frac{\mathfrak{a}'_1(z)}{\mathfrak{a}_1(z)}\bmod\pi_F\mathcal{O}_F[[z]]$ and therefore, from Equation~\eqref{eq_h_1_red_2}, we have $\frac{R'_2(z)}{R_2(z)}\in\pi_M\mathcal{O}_{M}[[z]]$. 
  
 Let us write $$\frac{R'_2(z)}{R_2(z)}=\sum_{\tau\in S_1}\frac{c_{\tau}}{z-\tau},$$
 where $S_1$ is the set of poles and zero of $R_2(z)$ and $c_{\tau}\in\mathbb{Z}\setminus\{0\}$.
 
So, it follows from Equation~\eqref{eq_h_1_red_2} that, for all $\tau\in S_1$, $D_{\tau}\in\{D_{\alpha_1},\ldots, D_{\alpha_q}\}$. Moreover, since $\frac{\mathfrak{q}'_1(z)}{\mathfrak{q}_1(z)}=\sum_{i=1}^m\frac{\Gamma_i}{z-\alpha_{c_i}}$ and $\frac{\mathfrak{a}'_1(z)}{\mathfrak{a}_1(z)}=\sum_{i=m+1}^q\frac{\Gamma_i}{z-\beta_{i}}$, by using (ii) of Lemma~\ref{lem_res}, we deduce from Equation~\eqref{eq_h_1_red_2} that, for every $1\leq i\leq q$, 
 \begin{equation}\label{eq_u_i}
res\left(\frac{h'_1(z)}{h_1(z)}, D_{\alpha_i}\right)=(\Gamma_i+\sum_{\tau\in S_1\cap D_{\alpha_i}}c_{\tau})\bmod p^2.
 \end{equation}
But, $\Gamma_i=res\left(\frac{h'_1(z)}{h_1(z)}, D_{\alpha_i}\right)$.
Whence, 
\begin{equation}\label{eq_p_2}
\sum_{\tau\in S_1\cap D_{\alpha_i}}c_{\tau}=0\bmod p^2\mathbb{Z}.
\end{equation}
  Further, if $\tau\in D_{\alpha_i}\cap S_1$, by Lemma~\ref{lem_conv}, we conclude that $$\frac{1}{z-\tau}=\frac{1}{z-\alpha_i}\sum_{l\geq0}\left(\frac{\tau-\alpha_i}{z-\alpha_i}\right)^l.$$
Since $|\alpha_i-\tau|<1$, we have $\alpha_i-\tau\in\pi_M\mathcal{O}_M$. So, from the previous equality, we get $$\frac{1}{z-\tau}=\frac{1}{z-\alpha_i}+\frac{\tau-\alpha_i}{(z-\alpha_i)^2}\bmod\pi_M^2\mathcal{O}_M[[z]].$$
 But $c_{\tau}\in\pi_M\mathcal{O}_M$ because  $\frac{R'_2(z)}{R_2(z)}\in\pi_M\mathcal{O}_{M}[[z]].$ So, from the previous equality, we get 
 \begin{equation}\label{eq_tau}
 \frac{c_{\tau}}{z-\tau}=\frac{c_{\tau}}{z-\alpha_i}\bmod\pi_M^2\mathcal{O}_M[[z]].
 \end{equation}
 Therefore, combining Equations~\eqref{eq_h_1_red_2}, \eqref{eq_p_2}, and \eqref{eq_tau}, we deduce that $$\frac{h_1'(z)}{h_1(z)}=\sum_{i=0}^m\frac{\Gamma_i+\sum_{\tau\in S\cap D_{\alpha_{i}}}c_{\tau}}{z-\alpha_{c_i}}+\sum_{i=m+1}^q\frac{\Gamma_i+\sum_{\tau\in S\cap D_{\alpha_{i}}}c_{\tau}}{z-\beta_i}=\sum_{i=0}^m\frac{\Gamma_i}{z-\alpha_{c_i}}+\sum_{i=m+1}^q\frac{\Gamma_i}{z-\beta_i}\bmod\pi_M^2\mathcal{O}_M[[z]].$$
 It follows from Equation~\eqref{eq_deco_h1} that $$\frac{h_1'(z)}{h_1(z)}=\sum_{i=0}^m\frac{\Gamma_i}{z-\alpha_{c_i}}+\sum_{i=m+1}^q\frac{\Gamma_i}{z-\beta_i}+\frac{\mathfrak{j}_1'(z)}{\mathfrak{j}_1(z)}.$$
 So, from the previous two equalities we get $$\frac{\mathfrak{j}_1'(z)}{\mathfrak{j}_1(z)}\in\pi_M^2\mathcal{O}_M[[z]].$$ 
 We have already seen that $\mathfrak{j}_1(z)$ is a unit of $\mathcal{O}_F[[z]]$ and therefore, $\mathfrak{j}'_1(z)\in\pi_F^2\mathcal{O}_F[[z]]$. 
 
\textit{Proof of (v)} Since, for all $1\leq i\leq r$, $p^sd_i=a_i$, we have $(f^{d_1}_1(z)\cdots f^{d_r}_r(z))^{p^s}=f^{a_1}_1(z)\cdots f^{a_r}_{r}(z)$. But, we know that $f^{a_1}_1(z)\cdots f^{a_r}_{r}(z)\in E_{0,K}$.
 
 $\hfill\square$.
   
   \section{Proof of Theorem~\ref{theo_criterion_alg_ind}}\label{sec_proof}
   
   Before proving Theorem~\ref{theo_criterion_alg_ind}, we need the following lemma.

\begin{lemm}\label{rem_mod_2}
Let $K$ be a Frobenius field and let $f(z)$ be in $\mathcal{M}\mathcal{F}(K)$. Then $$f(z)\equiv P(z)t(z^{p^2})\bmod\mathcal{O}_K[[z]],$$ 
where $P(z)\in K_0(z)$, $|P(z)|_{\mathcal{G}}=1$, and $t(z)\in 1+z\mathcal{O}_K[[z]]$. 
\end{lemm}
\begin{proof}
Indeed, thanks to Theorem~\ref{theo_analytic_element}, $f(z)/f(z^{p^h})\in E_{0,K}$, where $h>0$ is an integer. Thus, there is $B(z)\in K_0(z)$ such that $f(z)/f(z^{p^h})\equiv B\bmod\pi^2_K\mathcal{O}_K[[z]]$. By Theorem~\ref{theo_analytic_element} again, we also know that $f(z)\in 1+z\mathcal{O}_K[[z]]$ and thus, $|f(z)/f(z^{p^h})|_{\mathcal{G}}=1$. Given that the norm is non-Archimedean, the equality $f(z)/f(z^{p^h})\equiv B\bmod\pi^2_K\mathcal{O}_K[[z]]$ implies that $|B(z)|_{\mathcal{G}}=1$. In addition, $f(z)/f(z^{p^h})\equiv B\bmod\pi^2_K\mathcal{O}_K[[z]]$ also implies that $f(z)/f(z^{p^{2h}})\equiv B(z)B(z^{p^h})\bmod\pi^2_K\mathcal{O}_K[[z]]$. Whence, $f(z)\equiv B(z)B(z^{p^h})f(z^{p^{2h}})\bmod\pi^2_K\mathcal{O}_K[[z]]$. So, we take $P(z)=B(z)B(z^{p^h})$ and $t(z)=f(z^{p^{2h}}).$
\end{proof}

 \begin{proof}[Proof of Theorem~\ref{theo_criterion_alg_ind}]
Let us write $\textbf{Pol}(f'_i(z)/f_i(z))=\{D_{\alpha_{1,i}},\ldots, D_{\alpha_{g_i,i}}\}$. By Theorem~\ref{theo_analytic_element}, we know that $f'_i(z)/f_i(z)\in E_{0,K}$ and thus, by using Equation~\eqref{eq_mittag}, we conclude that $|\alpha_{l,i}|=1$ for every $1\leq l\leq g_i$.

Since $f_1(z),\ldots, f_r(z)$ are algebraically dependent over $E_K$, according to Theorem~\ref{theo_power}, there are $d_1,\ldots, d_r\in\mathbb{Z}$, not all zero, such that $$f_1^{d_1}\cdots f_r^{d_r}=\mathfrak{q}_1(z)\mathfrak{a}_1(z)\mathfrak{j}_1(z) \text{ where }\mathfrak{q}_1(z)=\prod_{i=1}^r(z-\alpha_{c_i})^{\Gamma_i},\text{ }\mathfrak{a}_1(z)\in\bm{L}(z),\text{ and } \mathfrak{j}_1(z)\in\mathcal{O}_{\bm{L}}[[z]],$$
where $\bm{L}$ is a finite extension of $K$.

We also know from Theorem~\ref{theo_power} that $\mathfrak{j}'_1(z)\in\pi_{\bm{L}}^2\mathcal{O}_{\bm{L}}[[z]]$, $|\mathfrak{j}_1(0)|=1$, and $\Gamma_i=d_i\cdot res\left(\frac{f'_i(z)}{f_i(z)},D_{\alpha_{c_i}}\right)\in\mathbb{Z}$ for all $1\leq i\leq r$, and that there is $d_{s}\in\{d_1,\ldots, d_r\}$ that is not divisible by $p$. Further, Theorem~\ref{theo_power} also says that, for all $1\leq i\leq r$, $\mathfrak{a}_1(z)\in E(D_{\alpha_{c_i}})$ and $\mathfrak{a}_1(\alpha_{c_i})\neq0$, and all the poles and zeros of $\mathfrak{a}_1(z)$ have norm $1$.



It follows from Lemma~\ref{rem_mod_2} that, for every, $j\in\{1,\ldots, r\}$, $f_j(z)=P_j(z)t_j(z^{p^2})\bmod\pi_K^{2}\mathcal{O}_K[[z]]$, where $P_j(z)\in K_0(z)$ with $|P_j(z)|_{\mathcal{G}}=1$ and $t_j(z)\in 1+z\mathcal{O}_K[[z]]$. So, for every $1\leq j\leq r$, we have $\left|\frac{f'_j(z)}{f_j(z)}-\frac{P'_j(z)}{P_j(z)}\right|_{\mathcal{G}}\leq 1/p^2$. Therefore, the poles and zeros of $P_j(z)$ belong to $D_{\alpha_{1,j}}\cup\cdots\cup D_{\alpha_{g_j,j}}$. In particular, $P_j(z)\in E(D_0)$ for all $1\leq j\leq r$. Further, for all $1\leq j\leq r$ and, for all $1\leq l\leq g_j$, $0\notin D_{\alpha_{l,j}}$ because $|\alpha_{l,j}|=1$ and consequently $P_j(0)\neq0$  for all $1\leq i\leq r$. 

Note that, for all $j\in\{1,\ldots, r\}\setminus\{s\}$, $P_{j}(z)\in E({D_{\alpha_{c_{s}}}})$  and $P_j(\alpha_{c_{s}})\neq0$. Indeed, since $j\neq s$, we get, by assumption, that $D_{\alpha_{c_{s}}}\notin\textbf{Pol}(f'_{j}(z)/f_j(z))$. But, recall that $\textbf{Pol}(f'_{j}(z)/f_j(z))=\{D_{\alpha_{1,j}},\ldots, D_{\alpha_{g_j,j}}\}$ and we have already seen that the poles and zeros of  $P_j(z)$ belong to $D_{\alpha_{1,j}}\cup\cdots\cup D_{\alpha_{g_j,j}}$. Whence,  for all $j\in\{1,\ldots, r\}\setminus\{s\}$, $P_{j}(z)\in E({D_{\alpha_{c_{s}}}})$  and $P_j(\alpha_{c_{s}})\neq0$.
 

Now, let $\bm{F}$ be the finite extension of $\bm{L}$ that contains all the roots of the following polynomials.
\begin{enumerate}
\item  $X^{d_{s}}-P_j(\alpha_{c_{s}})$, with $j\in\{1,\ldots, r\}\setminus\{s\}$,
\item $X^{d_{s}}-\alpha_{c_i}$ for all $1\leq i\leq r$,
\item $X^{d_{s}}-P_j(0)$ for all $1\leq j\leq r$,
\item $X^{d_{s}}-\mathfrak{a}_1(\alpha_{c_{s}})$, $X^{d_{s}}-\mathfrak{j}_1(0)$, and $X^{d_{s}}-\mathfrak{a}_1(0)$.
\end{enumerate}
 
 We know that $|\alpha_{c_i}|=1$ for every $1\leq i\leq r$. So, it follows from (ii) of Lemma~\ref{lemm_rational} that, for every $1\leq i\leq r$, $(z-\alpha_{c_i})^{\frac{\Gamma_i}{d_{s}}}\in\mathcal{O}_{\bm{F}}[[z]]$. We also know that all the poles and zeros of $\mathfrak{a}_1(z)$ have norm 1. Hence, $\mathfrak{a}_1(z)\in E(D_0)$ and $\mathfrak{a}_1(0)\neq0$. Then, by (ii) of Lemma~\ref{lemm_rational}, we get that $\mathfrak{a}_1(z)^{1/d_{s}}$ belongs to $\mathcal{O}_{\bm{F}}[[z]]$.

We have already seen that, for all $1\leq j\leq r$, $P_j(z)\in E(D_0)$ and that $P_j(0)\neq0$. So, from (ii) of Lemma~\ref{lemm_rational}, we deduce that,  $j\in\{1,\ldots, r\}$, $P_j(z)^{\frac{-d_j}{d_{s}}}\in\mathcal{O}_{\bm{F}}[[z]]$. In addition, from Lemma~\ref{lemm_rational}, we also know that $|P_j(0)|=1$ for all $1\leq j\leq r$.

 Since, for every, $i\in\{1,\ldots, r\}$, $f_i(z)=P_i(z)t_i(z^{p^2})\bmod\pi_K^{2}\mathcal{O}_K[[z]]$, there exists  $g_i\in\mathcal{O}_K[[z]]$ such that $f_i(z)=P_i(z)t_i(z^{p^2})+\pi_K^2g_i(z)$. So, it is clear that $f_i(z)=P_i(z)t_i(z^{p^2})\left(1+\pi_K^2\frac{g_i(z)}{P_i(z)t_i(z^{p^2})}\right)$. Whence $$f_i(z)^{-\frac{d_i}{d_{s}}}=(P_i(z)t_i(z^{p^2}))^{-\frac{d_i}{d_{s}}}\left(1+\pi_K^2\frac{g_i(z)}{P_i(z)t_i(z^{p^2})}\right)^{-\frac{d_i}{d_{s}}}.$$  
 We have \[\left(1+\pi_K^2\frac{g_i(z)}{P_i(z)t_i(z^{p^2})}\right)^{-\frac{d_i}{d_{s}}}=\sum_{j\geq0}\binom{-d_i/d_{s}}{j}\left(\pi_K^2\frac{g_i(z)}{P_i(z)t_i(z^{p^2})}\right)^j.\]
 Given that $|P_i(z)t_i(z^{p^2})|_{\mathcal{G}}=1$ and that $g_i\in\mathcal{O}_K[[z]]$, we have $\left|\frac{g_i(z)}{P_i(z)t_i(z^{p^2})}\right|_{\mathcal{G}}\leq1$. In addition, for all $j\geq0$, $\binom{-d_i/d_{s}}{j}\in\mathbb{Z}_p$ because $p$ does not divide $d_{s}$. Therefore, $\left(1+\pi_K^2\frac{g_i(z)}{P_i(z)t_i(z^{p^2})}\right)^{-\frac{d_i}{d_{s}}}=1\bmod\pi_{\bm{F}}^2\mathcal{O}_{\textbf{F}}[[z]]$.
 
Thus, we deduce that $f_i(z)^{-\frac{d_i}{d_{s}}}=(P_i(z)t_i(z^{p^2}))^{-\frac{d_i}{d_{s}}}\bmod\pi_{\bm{F}}^2\mathcal{O}_{\bm{F}}[[z]]$.

We know that, for every $1\leq i\leq r$, $t_i(z)\in 1+z\mathcal{O}_K[[z]]$ and thus, by Remark~\ref{rem_a_p}, we deduce that, for every $i\in\{1,\ldots, r\}$, $$t_i(z)^{-\frac{d_i}{d_{s}}}=\sum_{j\geq0}\rho_{i,j}z^j\in\mathcal{O}_{\bm{F}}[[z]]\text{ with }|\rho_{i,0}|=1.$$ 

 In addition, we know that $\mathfrak{j}_1(z)\in\mathcal{O}_{\bm{L}}[[z]]$, with $|\mathfrak{j}_1(0)|=1$ and thus, from Remark~\ref{rem_a_p}, we deduce that $\mathfrak{j}_1(z)^{1/d_{s}}\in\mathcal{O}_{\bm{F}}[[z]]$. 

Now, it is clear that
 \begin{equation*}
 f_{s}(z)=\prod_{i=1}^r(z-\alpha_{c_i})^{\frac{\Gamma_i}{d_{s}}}\mathfrak{a}_1(z)^{1/d_{s}}\mathfrak{j}_1(z)^{1/d_{s}}\prod_{i=1, i\neq s}^rf_i(z)^{-\frac{d_i}{d_{s}}}.
 \end{equation*}
 So, by reducing the previous equality modulo $\pi^2_{\bm{F}}$, we get $$f_{s}(z)=\mathfrak{r}(z)\mathfrak{c}(z)\bmod\pi^2_{\bm{F}}\mathcal{O}_{\bm{F}}[[z]],$$
 where $$\mathfrak{r}(z)=\prod_{i=1}^r(z-\alpha_i)^{\frac{\Gamma_i}{d_{s}}}\mathfrak{a}_1(z)^{1/d_{s}}\left(\prod_{i=1, i\neq s}^rP_i(z)^{-\frac{d_i}{d_{s}}}\right)\text{ and }\mathfrak{c}(z)=\left(\prod_{i=1,i\neq s}^r\left(\sum_{j\geq0}\rho_{i,j}z^{p^2j}\right)\right)\mathfrak{j}_1(z)^{1/d_{s}}.$$
 
 By assumption, $\mathcal{L}_s(f_s)=0$. So $\mathcal{L}_{s\mid\mathscr{M}_{\bm{F}}^2}(f_s\bmod\pi_{\bm{F}}^2\mathcal{O}_{\bm{F}})=0$. Thus, $\mathcal{L}_{s\mid\mathscr{M}_{\bm{F}}^2}(\mathfrak{r}(z)\mathfrak{c}(z))=0$. Now, we know that  and $\mathfrak{j}'_1(z)\in\pi_{\bm{L}}^2\mathcal{O}_{\bm{L}}[[z]]$ and give that  $\pi_{\bm{L}}^2\in\pi_{\bm{F}}^2\mathcal{O}_{\bm{F}}$ then $\mathfrak{j}'_1(z)\in\pi_{\bm{F}}^2\mathcal{O}_{\bm{F}}[[z]]$. Further, we also have $p^2\in\pi_{\bm{F}}^2\mathcal{O}_{\bm{F}}$. Therefore, $\mathfrak{c}'(z)=0\bmod\pi_{\bm{F}}^2\mathcal{O}_{\bm{F}}[[z]]$. Whence $$\mathcal{L}_{s\mid\mathscr{M}_{\bm{F}}^2}(\mathfrak{r}(z)\mathfrak{c}(z))=\mathfrak{c}(z)\mathcal{L}_{s\mid\mathscr{M}_{\bm{F}}^2}(\mathfrak{r}(z))=0.$$
Note that $\mathfrak{c}(z)\bmod\pi_{\bm{F}}^2\mathcal{O}_{\bm{F}}[[z]]$ is a unit of the ring $\frac{\mathcal{O}_{\bm{F}}}{\pi_{\bm{F}}^2}[[z]]$ because $|\rho_{i,0}|=1$ for all $i\in\{1,\ldots, r\}$ and $|\mathfrak{j}_1(z)|=1$. Therefore, $\mathcal{L}_{s\mid\mathscr{M}_{\bm{F}}^2}(\mathfrak{r}(z))=0$
 
 By assumption, $D_{\alpha_{c_{s}}}\notin\textbf{Pol}(f'_{i}(z)/f_i(z))$ for all $i\in\{1,\ldots, r\}\setminus\{s\}$. So, $D_{\alpha_{c_i}}\neq D_{\alpha_{c_{s}}}$ for all $i\in\{1,\ldots, r\}\setminus\{s\}$ because $D_{\alpha_{c_i}}\in\textbf{Pol}(f'_i(z)/f_i(z))$. Then, Remark~\ref{rem_discos} implies that if $1\leq i\leq r$ with $i\neq s$ then $|\alpha_{c_{s}}-\alpha_{c_i}|=1$. Since $p$ does not divide $d_{s}$ and $\Gamma_i$ is an integer, from Lemma~\ref{lemm_rational}, we get that, for all $1\leq i\leq r$ with $i\neq s$, $$(z-\alpha_{c_i})^{\frac{\Gamma_i}{d_{s}}}=\sum_{j\geq0}\zeta_{j,i}(z-\alpha_{c_{s}})^j\in\mathcal{O}_{\bm{F}}[[z-\alpha_{c_{s}}]]$$ with $|\zeta_{0,i}|=1$.

Consequently, $$\left(\prod_{i=1,i\neq {s}}^r(z-\alpha_{c_i})^{\Gamma_i/d_{s}}\right)=\left(\prod_{i=1, i\neq s}^r\left(\sum_{j\geq0}\zeta_{j,i}(z-\alpha_{c_{s}})^j\right)\right)=\sum_{j\geq0}\xi_j(z-\alpha_{c_{s}})^j\in\mathcal{O}_{\bm{F}}[[z-\alpha_{c_{s}}]]$$
with $|\xi_0|=1.$ 

We know that $\mathfrak{a}_1(z)\in E(D_{\alpha_{c_{s}}})$, $\mathfrak{a}_1(\alpha_{c_{s}})\neq0$ and all the poles and zeros of $\mathfrak{a}_1(z)$ have norm $1$. So, by (ii) of Lemma~\ref{lemm_rational}, we get that $$\mathfrak{a}_1(z)^{1/d_{s}}=\sum_{j\geq0}\epsilon_j(z-\alpha_{c_{s}})^j\in\mathcal{O}_{\bm{F}}[[z-\alpha_{c_{s}}]]\text{ with } |\epsilon_0|=1.$$


 We have seen that if $i\in\{1,\ldots,r\}\setminus\{s\}$ then $P_i(z)\in E(D_{\alpha_{c_{s}}})$ and $P_i(\alpha_{c_{s}})\neq0$. Consequently, by Lemma~\ref{lemm_rational}, we deduce that, for any $i\neq s$, $$P_i(z)^{-\frac{d_i}{d_{s}}}=\sum_{j\geq0}\gamma_{i,j}(z-\alpha_{c_{s}})^j\in\mathcal{O}_{\bm{F}}[[z-\alpha_{c_{s}}]]\text{ with } |\gamma_{i,0}|=1.$$ 
 
Then $$\left(\sum_{j\geq0}\xi_j(z-\alpha_{c_{s}})^j\right)\left(\sum_{j\geq0}\epsilon_j(z-\alpha_{c_{s}})^j\right)\prod_{i=1,i\neq s}^r\left(\sum_{j\geq0}\gamma_{i,j}(z-\alpha_{c_{s}})^j\right)\in\mathcal{O}_{\bm{F}}[[z-\alpha_{c_{s}}]]$$
and $\left|\xi_0\epsilon_0\prod_{i=1,i\neq s}^r\gamma_{i,0}\right|=1$.

Further, it is clear that $$\mathfrak{r}(z)=(z-\alpha_{c_{s}})^{\frac{\Gamma_{s}}{d_{s}}}\left(\sum_{j\geq0}\xi_j(z-\alpha_{c_{s}})^j\right)\left(\sum_{j\geq0}\epsilon_j(z-\alpha_{c_{s}})^j\right)\prod_{i=1,i\neq s}^r\left(\sum_{j\geq0}\gamma_{i,j}(z-\alpha_{c_{s}})^j\right).$$
Note that $\Gamma_{s}/d_{s}$ belongs to $\mathbb{Z}_{(p)}$ because $\Gamma_{s}$ is an integer and $p$ does not divide $d_{s}$. So, we are able to apply Lemma~\ref{lem_exponents} and we deduce that $\Gamma_{s}/d_{s}\bmod p^2$ is an exponent of $L_{s\mid\mathscr{M}_{\bm{F}}^2}$ at $\alpha_{c_{s}}$. 
Finally, we already know that $$\frac{\Gamma_{s}}{d_{s}}= res\left(\frac{f'_{s}(z)}{f_{s}(z)},D_{\alpha_{c_{s}}}\right).$$
 \end{proof}

 \section{The algebraic independence of $E$- and $G$-functions: some examples}\label{sec_apli}
 
 As illustration of Theorem~\ref{theo_criterion_alg_ind}, in this section we will prove the algebraic independence of some $E$- and $G$-functions. Let us consider the following power series $$\mathfrak{h}(z)=\sum_{n\geq0}\frac{1}{64^n}\binom{2n}{n}^3z^n,\quad \mathfrak{f}(z)=\sum_{n\geq0}\frac{-1}{(2n-1)64^n}\binom{2n}{n}^3z^n,{ and }\quad\mathfrak{A}(z)=\sum_{n\geq0}\left(\sum_{k=0}^n\binom{n}{k}^2\binom{n+k}{k}^2\right)z^n.$$
Note that $\mathfrak{h}(z)$ is the hypergeometric series ${}_3F_2((1/2,1/2,1/2),(1,1),z)$ and $\mathfrak{f}(z)$ is the hypergeometric series ${}_3F_2((-1/2,1/2,1/2),(1,1),z)$.
 \begin{theo}\label{theo_h_A} 
 Let $K$ be a finite extension of  $\mathbb{Q}_3$ that is a Frobenius field.
 \begin{enumerate}[label=(\roman*)]
 \item The power series $\mathfrak{h}(z)$ and $\mathfrak{f}(z)$ are algebraically dependent over $E_{K}$. Moreover, $\mathfrak{f}(z)/\mathfrak{h}(z)\in E_K$.
 \item The power series $\mathfrak{h}(z)$ and $\mathfrak{A}(z)$ are algebraically independent over $E_K$.
 \item The power series $\mathfrak{f}(z)$ and $\mathfrak{A}(z)$ are algebraically independent over $E_K$.
 \end{enumerate}
 \end{theo}
 \begin{proof}
 The power series $\mathfrak{h}(z)$, $\mathfrak{f}(z)$ are respectively  solution of the following hypergeometric operators 
 \begin{equation*}\label{eq_diff}
 \mathcal{L}:=\delta^3-z(\delta+1/2)^3,\quad \mathcal{D}=\delta^3-z(\delta-1/2)(\delta+1/2)^2.
 \end{equation*}
 It is clear that these differential operators are MOM at zero and, it follows from \cite[Theorem~6.2]{vargas1} that they have strong Frobenius structure for $p=3$. For this reason $\mathfrak{h}(z)$ and $\mathfrak{f}(z)$ belong to $\mathcal{M}\mathcal{F}(\mathbb{Q}_3)$ and thus, they also belong to $\mathcal{M}\mathcal{F}(K)$.
  
 (i). By \cite[Theorem~6.2]{vargas1} again, we know that  $\delta^3-z(\delta+1/2)^3$ has a strong Frobenius structure for $p=3$ of period $1$. Thus, according to (ii) of Theorem~\ref{theo_analytic_element},  we get that $\mathfrak{h}(z)/\mathfrak{h}(z^3)\in E_{\mathbb{Q}_3}$. But, according to Corollary 2 of Section 1 of \cite{Dworkpciclos} and Theorem~2 of \cite{Dworkpciclos}, $\mathfrak{f}(z)/\mathfrak{h}(z^3)\in E_{\mathbb{Q}_3}$. Given that $E_{\mathbb{Q}_3}$ is a field we obtain $\mathfrak{f}(z)/\mathfrak{h}(z)\in E_{\mathbb{Q}_3}$. 
 
 
 
 (ii). We have already seen that $\mathfrak{h}(z)\in\mathcal{M}\mathcal{F}(K)$. Now, we show that $\mathfrak{A}(z)$ belongs to $\mathcal{M}\mathcal{F}(K)$. Indeed, by \cite[Theorem~1]{masha}, we get that $\mathfrak{A}(z)/\mathfrak{A}(z^3)$ belongs to $E_{0,\mathbb{Q}_3}$. Consequently, from Remark 5.7 of \cite{vargas6}, we obtain $\mathfrak{A}'(z)/\mathfrak{A}(z)$ belongs to $E_{0,\mathbb{Q}_3}$. Further, it is clear that $\mathfrak{A}(z)$ is solution of the differential operator $\delta-\frac{\delta\mathfrak{A}(z)}{\mathfrak{A}(z)}$. Notice that this differential operator is MOM at zero. In addition, this differential operator belongs to $E_{\mathbb{Q}_3}[\delta]$ because $\mathfrak{A}'(z)/\mathfrak{A}(z)\in E_{\mathbb{Q}_3}$ and has a Frobenius structure because $\mathfrak{A}(z)/\mathfrak{A}(z^3)\in E_{\mathbb{Q}_3}$. Finally $\left|\frac{\delta\mathfrak{A}(z)}{\mathfrak{A}(z)}\right|_{\mathcal{G}}\leq1$ because $\mathfrak{A}(z)\in 1+z\mathbb{Z}[[z]]$. So, we conclude that $\mathfrak{A}(z)\in\mathcal{M}\mathcal{F}(\mathbb{Q}_3)$ and thus, $\mathfrak{A}(z)$ belongs to $\mathcal{M}\mathcal{F}(K)$.  

 Now, in order to apply Theorem~\ref{theo_criterion_alg_ind}, we proceed to verify that its assumptions are satisfied.

\textbf{First Step} We show that $\mathfrak{h}'(z)/\mathfrak{h}(z)\in E(\mathcal{O}_{\mathbb{C}_3}\setminus D_{\alpha_1})$ and that $\mathfrak{A}'(z)/\mathfrak{A}\in E(\mathcal{O}_{\mathbb{C}_3}\setminus D_{\alpha_2})$ with $\alpha_1=1$ and $\alpha_2=-1$.

It follows from Corollary~2 of Section 1 of \cite{Dworkpciclos} and Theorem~2 of \cite{Dworkpciclos} that, for all integers $r\geq1$, 
\begin{equation}\label{eq_cd_h}
\frac{\mathfrak{h}(z)}{\mathfrak{h}(z^3)}=\frac{\mathfrak{h}_r(z)}{\mathfrak{h}_{r-1}(z^3)}\bmod 3^{r}\mathbb{Z}[[z]],
\end{equation}
where, for $r\geq0$, $\mathfrak{h}_r(z)$ is the $3^{r}-1$\nobreakdash-th truncation of $\mathfrak{h}(z)$. Thus, according to Lemma~2 of \cite{masha}, $\mathfrak{h}(z)/\mathfrak{h}(z^3)$ extends to $\Omega=\{x\in\mathcal{O}_{\mathbb{C}_3}: |\mathfrak{h}_1(x)|=1\}$. As $\mathfrak{h}_1(z)=1+\frac{1}{8}z+\frac{27}{512}z^2$ then, it is not hard to see that, $ |\mathfrak{h}_1(x)|<1$ if and only if $x\in D_{\alpha_1}$. So, $\Omega=\mathcal{O}_{\mathbb{C}_3}\setminus D_{\alpha_1}$.  We put $H(z)=\mathfrak{h}(z)/\mathfrak{h}(z^{3})$. Then $H\in E(\Omega)$. Moreover, from (iii) of Lemma~2 of \cite{masha}, we also have $|H(x)|=1$ for all $x\in\Omega$. Hence, $H$ is a unit of the ring $E(\Omega)$. Therefore, if for every $m\geq0$, we consider $H_m=H(z)H(z^{p})\cdots H(z^{p^{m}})$ then $H_m\in E(\Omega)$ and $|H_m(x)|=1$ for all $x\in\Omega$. So $H_m$ is a unit of $E(\Omega)$ and we also have the equality
\begin{equation*}
H'_m(z)=H_m(z)\left[\frac{\mathfrak{h}'(z)}{\mathfrak{h}(z)}-3^{m+1}z^{3^{m+1}-1}\frac{\mathfrak{h}'(z^{3^{m+1}})}{\mathfrak{h}(z^{3^{m+1}})}\right].
\end{equation*}
Thus, for all integers $m\geq0$,
\begin{equation*}
\frac{\mathfrak{h}'(z)}{\mathfrak{h}(z)}\equiv\frac{H_m'(z)}{H_m(z)}\bmod 3^{m+1}.
\end{equation*}
 Since, for all $m\geq0$, $H_m$ is a unit of $E(\Omega)$, from the previous congruences, we conclude that $\frac{\mathfrak{h}'(z)}{\mathfrak{h}(z)}$ belongs to $E(\Omega)$. 
 
 We have a similar situation for $\mathfrak{A}(z)$. It follows from \cite[Theorem 1]{masha}  that, for all integers $r\geq1$, 
\begin{equation}\label{eq_cd_A}
\frac{\mathfrak{A}(z)}{\mathfrak{A}(z^3)}=\frac{\mathfrak{A}_r(z)}{\mathfrak{A}_{r-1}(z^3)}\bmod 3^r\mathbb{Z}[[z]],
\end{equation}
where for all $r\geq0$, $\mathfrak{A}_r(z)$ is the $3^r-1$\nobreakdash-th truncation of $\mathfrak{A}(z)$. Thus, according to Lemma~2 of \cite{masha}, $\mathfrak{A}(z)/\mathfrak{A}(z^3)$ extends to $\Lambda=\{x\in\mathcal{O}_{\mathbb{C}_3}: |\mathfrak{A}_1(x)|=1\}$.  As $\mathfrak{A}_1(z)=1+5z+73z^2$ then, it is not hard to see that, $ |\mathfrak{A}_1(x)|<1$ if and only if $x\in D_{\alpha_2}$. So, $\Lambda=\mathcal{O}_{\mathbb{C}_p}\setminus D_{\alpha_2}$. So, proceeding in a similar fashion as above, we conclude that $\mathfrak{A}'(z)/\mathfrak{A}(z)$ belongs to $E(\Lambda)$. 

\textbf{ Second Step} We show that $D_{\alpha_1}$ is a pole of $\mathfrak{h}'/\mathfrak{h}$ and that $D_{\alpha_2}$ is a pole of $\mathfrak{A}'/\mathfrak{A}$. It follows from Equations~\eqref{eq_cd_h} and \eqref{eq_cd_A} that $$\mathfrak{h}(z)=\mathfrak{h}_1(z)\mathfrak{h}(z^3)\bmod 3,\quad\mathfrak{A}(z)=\mathfrak{A}_1(z)\mathfrak{A}(z^3)\bmod 3.$$
For this reason
 \begin{equation}\label{eq_res}
 \frac{\mathfrak{h}'(z)}{\mathfrak{h}(z)}=\frac{1}{z-1}\bmod 3,\quad \frac{\mathfrak{A}'(z)}{\mathfrak{A}(z)}=\frac{2z-1}{z^2-z+1}\bmod 3.
 \end{equation}
So $D_{\alpha_1}$ is a pole of $\frac{\mathfrak{h}'(z)}{\mathfrak{h}(z)}$ and $D_{\alpha_2}$ is a pole of $\frac{\mathfrak{g}'(z)}{\mathfrak{g}(z)}$. 

We now put
\[
\lambda_1=res\left(\frac{\mathfrak{h}'(z)}{\mathfrak{h}(z)},D_{\alpha_{1}}\right)\text{ and }\lambda_2=res\left(\frac{\mathfrak{A}'(z)}{\mathfrak{A}(z)},D_{\alpha_2}\right).
\]
\textbf{Third Step} We now show that $\lambda_1\bmod 9=4$ and $\lambda_2=8\bmod 9.$
 
On the one hand, we deduce from Equation~\eqref{eq_cd_h} that
\begin{equation*}
\frac{\mathfrak{h}(z)}{\mathfrak{h}(z^9)}=\frac{\mathfrak{h}_2(z)}{\mathfrak{h}_1(z^3)}\frac{\mathfrak{h}_2(z^3)}{\mathfrak{h}_1(z^9)}\bmod 9.
\end{equation*}
Therefore, 
\begin{equation*}
\frac{\mathfrak{h}'(z)}{\mathfrak{h}(z)}=\frac{8+8z+5z^2}{1-z^3}=\frac{2z+1}{z^2+z+1}-\frac{7}{z-1}\bmod 9.
\end{equation*}
 Let $z_0,z_1\in\overline{\mathbb{Q}_3}$ be the roots of $1+z+z^2$. Then $|z_0|=1=|z_1|$, $z_0\neq z_1$, and  $z_0, z_1,\in D_{\alpha_1}$ because $1+z+z^2=(1-z)^2\bmod 3$. Furthermore, it is clear that $$\frac{2z+1}{z^2+z+1}-\frac{7}{z-1}=\frac{1}{z-z_0}+\frac{1}{z-z_1}-\frac{7}{z-1}.$$
 Since $1,z_0, z_1\in D_{\alpha_1}$, it follows from (1) of Lemma~\ref{lem_poles} and (ii) of Lemma~\ref{lem_res} that $$\lambda_1\bmod 9=(1+1-7)\bmod 9=-5\bmod 9=4.$$
 
On the other hand, we deduce from Equation~\eqref{eq_cd_A} that
\begin{equation*}
\frac{\mathfrak{A}(z)}{\mathfrak{A}(z^9)}=\frac{\mathfrak{A}_2(z)}{\mathfrak{A}_1(z^3)}\frac{\mathfrak{A}_2(z^3)}{\mathfrak{A}_1(z^9)}\bmod 9\mathbb{Z}[[z]]
\end{equation*}
Therefore, 
\begin{equation*}
\frac{\mathfrak{A}'(z)}{\mathfrak{A}(z)}=\frac{5+4z+8z^2+4z^3+5z^4+z^5}{1-z^6}\bmod 9\mathbb{Z}.
\end{equation*}
Note that
\begin{equation}\label{eq_A_mod9_1}
\frac{5+4z+8z^2+4z^3+5z^4+z^5}{1-z^6}=\frac{2z-1}{z^2-z+1}+\frac{3}{2(z+1)}\bmod 9\mathbb{Z}.
\end{equation}
Let $S$ be the roots of $z^2-z+1$. Note that $S\subset D_{\alpha_2}$. Further, we have $\sum_{\tau\in S}\left(\frac{2z-1}{z^2-z+1},\tau\right)=2.$ Thus, by (1) of Lemma~\ref{lem_poles} and (ii) of Lemma~\ref{lem_res}, we deduce from Equation~\eqref{eq_A_mod9_1} that $$\epsilon_2\bmod 9=(2+3/2)\bmod 9=8.$$


\textbf{Fourth Step.} We now proceed to see that the conditions (1)-(2) of Theorem~\ref{theo_criterion_alg_ind} are fulfilled. In First Step we proved that $\mathfrak{h}'(z)/\mathfrak{h}(z)\in E(\mathcal{O}_{\mathbb{C}_3}\setminus D_{\alpha_1})$ with $\alpha_1=1$ and that $\mathfrak{A}'(z)/\mathfrak{A}(z)\in E(\mathcal{O}_{\mathbb{C}_3}\setminus D_{\alpha_2})$ with $\alpha_2=-1$. We also proved in Second Step that $D_{\alpha_1}$ is a pole of $\mathfrak{h}'(z)/\mathfrak{h}(z)$ and that $D_{\alpha_2}$ is a pole of $\mathfrak{A}'(z)/\mathfrak{A}(z)$. Thus, $\textbf{Pol}(\mathfrak{h}'(z)/\mathfrak{h}(z))=\{D_{\alpha_1}\}$ and $\textbf{Pol}(\mathfrak{g}'(z)/\mathfrak{g}(z))=\{D_{\alpha_2}\}$. In addition, notice that $D_{\alpha_1}\neq D_{\alpha_2}$ because $|\alpha_1-\alpha_2|=1$. 


Finally, we have $\mathcal{L}(\mathfrak{h})=0$ and let us write $\mathcal{L}$ in terms of $d/dz$. Then, $$\mathcal{L}=\frac{d}{dz^3}+\left(\frac{3}{z}-\frac{3}{2(1-z)}\right)\frac{d}{dz^2}+\left(\frac{1}{z^2}-\frac{3}{2z(1-z)}-\frac{1}{z(1-z)}\right)\frac{d}{dz}-\frac{1}{8z^2(z-1)}\in\mathcal{O}_{\mathbb{Q}_3(z)}[d/dz].$$
So, it is clear that $\alpha_1$ is a regular singular point of $\mathcal{L}_{\mid\mathscr{M}^2}$. The indicial polynomial of $\mathcal{L}_{1\mid\mathscr{M}^2}$ at $\alpha_1$ is the $P=X(X-1)(X-2)+\frac{3}{2}X(X-1)\in\frac{\mathbb{Z}}{9\mathbb{Z}}[X]$.

Note that $\lambda_1\bmod 9$ is not an exponent of $\mathcal{L}_{\mathscr{M}^2}$ at $\alpha_1$ because $\lambda_1\bmod 9=4$ and $P(4)=6$.

 Further, we also know that $\mathfrak{A}(z)$ is solution of the differential operator $$\mathcal{H}=\frac{d}{dz^3}+\frac{3-153z+6z^2}{(1-34z+z^2)z}\frac{d}{dz^2}+\frac{1-112z+7z^2}{(1-34z+z^2)z^2}\frac{d}{dz}+\frac{z-5}{(1-34z+z^2)z^2}\in\mathcal{O}_{\mathbb{Q}_3(z)}[d/dz].$$
 We are going to see that $\alpha_2$ is a regular singular point of $\mathcal{H}_{\mid\mathscr{M}^2}$. We first observe that $(1-34z+z^2)=(1+z)^2\bmod 9\mathbb{Z}[[z]]$ and that $3-153z+6z^2=(1+z)(3+6z)\bmod 9\mathbb{Z}[[z]]$. Whence, $$(z+1)\frac{3-153z+6z^2}{z(1-34z+z^2)}=\frac{3+6z}{z}\bmod\mathscr{M}^2\mathcal{O}_{\mathbb{Q}_3(z)},$$ $$(1+z)^2\frac{1-112z+7z^2}{z^2(1-34z+z^2)}=\frac{1+5z+7z^2}{z^2}\bmod\mathscr{M}^2\mathcal{O}_{\mathbb{Q}_3(z)},$$
 $$(z+1)^3\frac{z-5}{z^2(1-34z+z^2)}=\frac{(z+1)(z-5)}{z^2}\bmod\mathscr{M}^2\mathcal{O}_{\mathbb{Q}_3(z)}.$$
 Further, it is clear that $\frac{3+6z}{z}$, $\frac{1+5z+7z^2}{z^2}$ and that $\frac{(z+1)(z-5)}{z^2}$ belong to $\mathbb{Z}_3[[z+1]]$. So $\alpha_2$ is a regular singular point of $\mathcal{H}_{\mid\mathscr{M}^2}$. In addition, the indicial polynomial of $\mathcal{H}_{\mid\mathscr{M}^2}$ at $\alpha_2$ is $$R=X(X-1)(X-2)+3X(X-1)+3X=X(X^2+2)\in\frac{\mathbb{Z}}{9\mathbb{Z}}[[z]].$$
 But $\lambda_2\bmod9$ is not an exponent of $\mathcal{H}_{\mathscr{M}^2}$ at $\alpha_2$ because $\lambda_2\bmod9=8$ and $R(8)=6$.

 So, the conditions (1)-(2) of Theorem~\ref{theo_criterion_alg_ind} are satisfied. Consequently, $\mathfrak{h}(z)$ and $\mathfrak{A}(z)$ are algebraically independent over $E_{K}$.

(iii) Let us suppose that $\mathfrak{f}(z)$ and $\mathfrak{A}(z)$ are algebraically dependent over $E_{K}$. Then, by Theorem~\ref{theo_alg_ind}, there are integers $a,b$ not all zero such that $\mathfrak{f}^{a}(z)\mathfrak{A}^b(z)\in E_{K}.$ According to (i), $\mathfrak{f}/\mathfrak{h}\in  E_{K}$. Thus, $$\mathfrak{h}^{a}(z)\mathfrak{A}^b(z)=\frac{\mathfrak{h}^a(z)}{\mathfrak{f}^a(z)}\mathfrak{f}^{a}(z)\mathfrak{A}^b(z)\in E_{K}.$$
Whence, $\mathfrak{A}(z)$ and $\mathfrak{h}(z)$ are algebraically dependent over $E_{K}$.That is a contradiction because, we have already seen that  $\mathfrak{A}(z)$ and $\mathfrak{h}(z)$ are algebraically independent over $E_{K}$. Consequently, $\mathfrak{f}(z)$ and $\mathfrak{A}(z)$ are algebraically independent over $E_{K}$.

\end{proof}


  \subsection{Proof of Theorem~\ref{theo_appli}}

\begin{proof} Let $\mathfrak{B}(z)=J_0(\pi_3z)$ and let $K=\mathbb{Q}_3(\pi_3)$. In the proof of Theorem~\ref{theo_h_A}, we have seen that $\mathfrak{h}(z), \mathfrak{f}(z)$, and $\mathfrak{A}(z)$ belong to $\mathcal{M}\mathcal{F}(K)$. Further, we have also seen in the proof of Theorem~\ref{theo_bessel} that $\mathfrak{B}(z)\in\mathcal{M}\mathcal{F}(K)$.

We first prove that $\mathfrak{h}(z)$, $\mathfrak{A}(z)$, and $\mathfrak{B}(z)$ are algebraically independent over $E_K$.

From the First step of the proof of Theorem~\ref{theo_h_A}, we know that $\mathfrak{h}'(z)/\mathfrak{h}(z)$ belongs to $E(\mathcal{O}_{\mathbb{C}_3}\setminus D_1)$ and that  $\mathfrak{A}'(z)/\mathfrak{A}(z)$ belongs to $E(\mathcal{O}_{\mathbb{C}_3}\setminus D_{-1})$. We have also shown that $D_{1}$ is the only pole of  $\mathfrak{h}'(z)/\mathfrak{h}(z)$ and that $D_{-1}$ is the only pole of  $\mathfrak{A}'(z)/\mathfrak{A}(z)$. 

Since $\mathfrak{B}(z)\in\mathcal{M}\mathcal{F}(K)$, Theorem~\ref{theo_analytic_element} implies the three following things: $\mathfrak{B}(z)\in 1+z\mathcal{O}_{K}[[z]]$, $T:=\mathfrak{B}(z)/\mathfrak{B}(z^3) \in E_{0,K}$, and $\mathfrak{B}'(z)/\mathfrak{B}(z)\in E_{0,K}$. Moreover it follows from Lemma~\ref{lem_3_adic} that $T(z)\in 1+z\pi_3\mathcal{O}_K[[z]]$. We are going to show that if $\alpha\in\mathcal{O}_{\mathbb{C}_3}$ then $D_{\alpha}$ is not a pole of $\mathfrak{B}'(z)/\mathfrak{B}(z)$. It follows from \cite[§ 6]{Bessel} that $T$ converges in $D(0,1^{+})=\{x\in\mathbb{C}_3: |x|\leq 1\}$. Moreover, we have  
\begin{equation}\label{eq_b}
\frac{\mathfrak{B}'(z)}{\mathfrak{B}(z)}\equiv\frac{T'_m(z)}{T_m(z)}\bmod\pi_3^{m+1}\mathcal{O}_{K}[[z]]\text{ with }T_m=T(z)T(z^3)\cdots T(z^{3^m}).
\end{equation}

Since $T\in 1+z\pi_3\mathcal{O}_{K}[[z]]$, we get that for any  $\alpha\in\mathcal{O}_{\mathbb{C}_3}$, $|T(\alpha)|=1$. Consequently, for all $m\geq1$, $|T_m(\alpha)|=1$. So, from Equation~\eqref{eq_b}, $D_{\alpha}$ is not a pole of $\mathfrak{B}'(z)/\mathfrak{B}(z)$.

Aiming for a contradiction, suppose that $\mathfrak{h}(z)$, $\mathfrak{A}(z)$, and $\mathfrak{B}(z)$ are algebraically dependent over $E_K$. Then, according to Theorem~\ref{theo_alg_ind}, there are integers $a$, $b$, and $c$ not all zero such that $\mathfrak{h}(z)^a\mathfrak{A}(z)^b\mathfrak{B}(z)^c\in E_{0,K}$ and, further,  by Theorem~\ref{theo_power}, we also have
\begin{equation}\label{eq_rela_alg}
\mathfrak{h}(z)^a\mathfrak{A}(z)^b\mathfrak{B}(z)^c=\prod_{i=1}^2(z-\beta_i)^{\mu_i}\mathfrak{t}(z),
\end{equation}
where $\mu_i=res\left(a\frac{\mathfrak{h}'(z)}{\mathfrak{h}(z)}+b\frac{\mathfrak{A}'(z)}{\mathfrak{A}(z)}+c\frac{\mathfrak{B}'(z)}{\mathfrak{B}(z)}, D_{\beta_i}\right)\in\mathbb{Z}$, $\beta_1\in D_1$ and $\beta_2\in D_{-1}$, and $\mathfrak{t}(z)=\prod_{n=1}^{\infty}\left(\frac{z-a_n}{z-b_n}\right)$, where $(a_n,b_n)_{n\geq1}$ is a strongly copiercing sequence associated to $D_0$.

Notice that Remark~\ref{rem_discos} implies $D_{\beta_1}=D_{1}$ and $D_{\beta_2}=D_{-1}$.   
 
As $K$ is a Frobenius then, by Theorem~\ref{theo_h_A}, the power series $\mathfrak{h}(z)$ and $\mathfrak{A}(z)$ are algebraically independent over $E_K$ and consequently, $c\neq0$. Now, from Equation~\eqref{eq_rela_alg}, we deduce that 
\begin{equation}\label{eq_deco_b'}
c\frac{\mathfrak{B}'(z)}{\mathfrak{B}(z)}=\sum_{i=1}^2\frac{\mu_i}{z-\beta_i}-a\frac{\mathfrak{h}'(z)}{\mathfrak{h}(z)}-b\frac{\mathfrak{A}'(z)}{\mathfrak{A}(z)}+\frac{\mathfrak{t}'(z)}{\mathfrak{t}(z)}.
\end{equation}
We have the following factorization $\mathfrak{t}(z)=\prod_{i=1}^4\mathfrak{t}_i(z)$, where for $i\in\{1,2\}$, $\mathfrak{t}_i(z)=\prod_{b_n\in D_{\beta_i}}\left(\frac{z-a_n}{z-b_n}\right)$, $\mathfrak{t}_3(z)=\prod_{b_n\in\mathcal{O}_{\mathbb{C}_3}\setminus D_{\beta_1}\cup D_{\beta_2}}\left(\frac{z-a_n}{z-b_n}\right)$ and $\mathfrak{t}_4(z)=\prod_{b_n\notin\mathcal{O}_{\mathbb{C}_3}}\left(\frac{z-a_n}{z-b_n}\right).$ As the sequence $(a_n,b_n)_{n\geq1}$ is a strongly copiercing sequence associated to $D_0$ then the sequences used to define  $\mathfrak{t}_1(z),\ldots,\mathfrak{t}_4(z)$ are also strongly copiercing sequence associated to $D_0$. Thus, by Remark~\ref{rem_copiercing},  $\mathfrak{t}_1(z),\ldots,\mathfrak{t}_4(z)$ belong to $E_p$. Further, it follows from Equation~\eqref{eq_deco_b'} that
\begin{equation}\label{eq_deco_b'1}
c\frac{\mathfrak{B}'(z)}{\mathfrak{B}(z)}=\frac{\mu_1}{z-\beta_1}-a\frac{\mathfrak{h}'(z)}{\mathfrak{h}(z)}+\frac{\mathfrak{t}'_1(z)}{\mathfrak{t}_1(z)}+\frac{\mu_2}{z-\beta_2}-b\frac{\mathfrak{A}'(z)}{\mathfrak{A}(z)}+\frac{\mathfrak{t}'_2(z)}{\mathfrak{t}_2(z)}+\sum_{i=3}^4\frac{\mathfrak{t}'_i(z)}{\mathfrak{t}_i(z)}.
\end{equation}
On the one hand, by using Mittag-Leffet's Theorem (see Equation~\eqref{eq_mittag}), we know that there exits a unique sequence $\{\alpha_n\}_{n\geq1}$ in $\mathcal{O}_{\mathbb{C}_3}$ such that, for every, $|\alpha_n|=1$, $|\alpha_n-\alpha_m|=1$ for every $n\neq m$ and there exists a unique sequence $\{f_n\}_{n\geq0}$ in $E_{0,K}$ such that $f_0\in E(\mathcal{O}_{\mathbb{C}_3})$, for $n>0$, $f_n\in E^{\alpha_n}$, and $$c\frac{\mathfrak{B}'(z)}{\mathfrak{B}(z)}=\sum_{n\geq0}f_n.$$
But, we have already seen that for any $\alpha\in\mathcal{O}_{\mathbb{C}_3}$, $D_{\alpha}$ is not a pole of $\mathfrak{B}'(z)/\mathfrak{B}(z)$ and thus, $f_n=0$ for all $n>0$.

On the other hand, it is clear $D_{\beta_1}$ is the only of pole of $\frac{\mu_1}{z-\beta_1}-a\frac{\mathfrak{h}'(z)}{\mathfrak{h}(z)}+\frac{\mathfrak{t}'_1(z)}{\mathfrak{t}_1(z)}$ and thus it belongs to $E^{\beta_1}$.  Similarly, $D_{\beta_2}$ is the only pole of $\frac{\mu_2}{z-\beta_2}-b\frac{\mathfrak{A}'(z)}{\mathfrak{A}(z)}+\frac{\mathfrak{t}'_2(z)}{\mathfrak{t}_2(z)}$ and thus it belongs to $E^{\beta_2}$. 

Since $f_n=0$ for all $n>0$ it follows from Equation~\eqref{eq_deco_b'1} that $$\frac{\mu_1}{z-\beta_1}-a\frac{\mathfrak{h}'(z)}{\mathfrak{h}(z)}+\frac{\mathfrak{t}'_1(z)}{\mathfrak{t}_1(z)}=0\text{ and }\frac{\mu_2}{z-\beta_2}-b\frac{\mathfrak{A}'(z)}{\mathfrak{A}(z)}+\frac{\mathfrak{t}'_2(z)}{\mathfrak{t}_2(z)}=0.$$


Given that $\frac{\mathfrak{t}'_1(z)}{\mathfrak{t}_1(z)}=\sum_{b_n\in D_{\beta_1}}\left(\frac{1}{z-a_n}-\frac{1}{z-b_n}\right)$ and $(a_n,b_n)_{b_n\in D_{\beta_1}}$ is a strongly copiercing sequence associated to $D_0$, from  Theorem~59.1 of \cite{E95},  we deduce that $\frac{\mu_1}{z-\beta_1}-a\frac{\mathfrak{h}'(z)}{\mathfrak{h}(z)}$ is integrable in $E_{0,p}$. Since $D_{\beta_1}$ is the only pole of $\mathfrak{h}'(z)/\mathfrak{h}(z)$ and $\mu_1=res(a\mathfrak{h}'(z)/\mathfrak{h}(z), D_{\beta_1})$, we get, from Lemma~59.2 of \cite{E95}, that $\frac{\mu_1}{z-1}-a\frac{\mathfrak{h}'(z)}{\mathfrak{h}(z)}$ is also integrable in $E_{0,p}$. Further, Theorem~59.11 of \cite{E95} implies that there exists an integer $e\geq0$ such that $$\frac{\mu_1}{z-1}-a\frac{\mathfrak{h}'(z)}{\mathfrak{h}(z)}=\frac{1}{p^e}\sum_{n\geq1}\left(\frac{1}{z-c_n}-\frac{1}{z-d_n}\right),$$
where $(c_n,d_n)_{n\geq1}$ is a  strongly copiercing sequence associated to $D_0$. So, by Remark~\ref{rem_copiercing}, $\mathfrak{j}(z)=\prod_{n=1}^{\infty}\left(\frac{z-c_n}{z-d_n}\right)$ belongs to $E_{0,p}$. Thus, there exists $\lambda\in\mathbb{C}_3\setminus\{0\}$ such that $\mathfrak{h}^a(z)=\lambda(z-1)^{\mu_1}\mathfrak{j}(z)^{1/p^e}$. Whence, $\mathfrak{h}^{ap^e}(z)=(z-1)^{\mu_1p^e}\mathfrak{j}_1(z)$, with $\mathfrak{j}_1(z)=\lambda^{p^e}\mathfrak{j}(z)$. We know that $\mathfrak{h}(z)\in 1+z\mathbb{Z}_3[[z]]$ and thus $\mathfrak{h}(z)/(z-1)\in -1+z\mathbb{Z}_3[[z]]$. So, we conclude that $\mathfrak{j}_1(z)\in E_{0,\mathbb{Q}_3}$

Similarly, we can prove that there exists an integer $m\geq0$ such that $\mathfrak{A}^{bp^{m}}(z)=(z+1)^{\mu_2p^{m}}\mathfrak{j}_2(z)$, with  $\mathfrak{j}_2(z)\in E_{0,\mathbb{Q}_3}$. 

Thus, $\mathfrak{h}^{ap^e}(z)\mathfrak{A}^{bp^{m}}(z)\in E_{0,\mathbb{Q}_3}$. But, according to Theorem~\ref{theo_h_A}, $\mathfrak{h}(z)$ and $\mathfrak{A}(z)$ are algebraically independent over $E_{\mathbb{Q}_3}$ and therefore, $ap^{e}=0=bp^{m}$. Whence $a=0=b$. But, we know that $\mathfrak{h}(z)^a\mathfrak{A}(z)^b\mathfrak{B}(z)^c\in E_{0,K}$ and thus $\mathfrak{B}(z)^c\in E_{0,K}$, which is a contradiction because $c\neq0$ and, according to Theorem~\ref{theo_bessel}, $\mathfrak{B}(z)$ is transcendental over $E_K$.


Therefore, $\mathfrak{h}(z)$, $\mathfrak{A}(z)$, and $\mathfrak{B}(z)$ are algebraically independent over $E_K$. 

Finally,  aiming for a contradiction, suppose that $\mathfrak{f}(z)$, $\mathfrak{A}(z)$ and $\mathfrak{B}(z)$ are algebraically dependent over $E_K$. Thus, according to Theorem~\ref{theo_alg_ind}, there are integers $a$, $b$, and $c$ not all zero such that $\mathfrak{f}(z)^a\mathfrak{A}(z)^b\mathfrak{B}(z)^c\in E_{0,K}$. But, according to (i) of Theorem~\ref{theo_h_A}, $\mathfrak{h}/\mathfrak{f}$ belongs to $E_{\mathbb{Q}_3}$. Thus $$\mathfrak{h}(z)^a\mathfrak{A}(z)^b\mathfrak{B}(z)^c=\left(\frac{\mathfrak{h}(z)}{\mathfrak{f}(z)}\right)^a\mathfrak{f}(z)^a\mathfrak{A}(z)^b\mathfrak{B}(z)^c\in E_K.$$
That is a contradiction because, we already saw that  $\mathfrak{h}(z)$, $\mathfrak{A}(z)$ and $\mathfrak{B}(z)$ are algebraically independent over $E_K$. Therefore,  $\mathfrak{f}(z)$, $\mathfrak{A}(z)$ and $\mathfrak{B}(z)$ are algebraically independent over $E_K$. Therefore, according to Theorem~\ref{theo_alg_ind}, for all integers $r$, $s$, $k\geq0$, $\mathfrak{B}^{(r)}(z)$, $\mathfrak{A}^{(s)}(z)$, and $\mathfrak{f}^{(k)}(z)$ are algebraically independent over $E_K$.
\end{proof}
As a consequence of Theorem~\ref{theo_appli} with have
\begin{coro}\label{coro_Q(z)}
For all integers $r,s, k\geq0$, the power series $\exp(\pi_3z)$, $J_0^{(r)}(\pi_3z)$, $\mathfrak{A}^{(s)}(z)$, and $\mathfrak{f}^{(k)}(z)$ are algebraically independent over $\mathbb{Q}(z)$.
\end{coro}

\begin{proof}
We first show that for any integers $r,s, k\geq0$, the power series $\mathfrak{B}(z)=J_0^{(r)}(\pi_3z)$, $\mathfrak{A}^{(s)}(z)$, and $\mathfrak{f}^{(k)}(z)$ are algebraically independent over $E_K(\exp(\pi_3z))$ with $K=\mathbb{Q}_3(\pi_3)$. To this end, we argue by contradiction: suppose that there exist integers $r,s, k\geq0$ such that $\mathfrak{B}^{(s)}(z)$, $\mathfrak{A}^{(r)}(z)$, and $\mathfrak{f}^{(k)}(z)$ are algebraically dependent over  $E_K(\exp(\pi_3z))$. We already know that $\mathfrak{B}(z)$, $\mathfrak{A}(z)$, and $\mathfrak{h}(z)$ belong to $\mathcal{M}\mathcal{F}(K)$. Then, Theorem~\ref{theo_analytic_element} implies that the logarithmic derivatives $\mathfrak{B}'(z)/\mathfrak{B}(z)$, $\mathfrak{A}'(z)/\mathfrak{A}(z)$, and $\mathfrak{f}'(z)/\mathfrak{h}(z)$ belong to $E_K$. Thus, it is not hard to see that we also have $\mathfrak{B}^{(r+1)}(z)/\mathfrak{B}^{(r)}(z)$,  $\mathfrak{A}^{(s+1)}(z)/\mathfrak{A}^{(s)}(z)$, and $\mathfrak{f}^{(k+1)}(z)/\mathfrak{h}^{(k)}(z)$ belong to $E_K$. In other words, $\mathfrak{B}^{(s)}$, $\mathfrak{A}^{(r)}(z)$, and $\mathfrak{f}^{(k)}$ are solutions of first-order differential operators with coefficients in $E_{K}$. Since they are assumed to be algebraically dependent over $E_K(\exp(\pi_3z))$, and noting that $E_K\subset E_K(\exp(\pi_3z))$, we deduce by Kolchin's Theorem (see e.g \cite[Theorem 3.3]{vargas6}) that there exist integers $a,b,c$, not all zero, such that  $(\mathfrak{B}^{(s)})^a(\mathfrak{A}^{(r)}(z))^b(\mathfrak{f}^{(k)})^c\in E_K(\exp(\pi_3z))$. Nevertheless, as we observed in Section~\ref{sec_trans}, $\exp(\pi_3z)$ is algebraic over $E_K$, so the extension $E_K(\exp(\pi_3z))$ is algebraic over $E_K$. In particular, $(\mathfrak{B}^{(s)})^a(\mathfrak{A}^{(r)}(z))^b(\mathfrak{h}^{(k)})^c$ is also algebraic over $E_K$. That is a contradiction because, according to Theorem~\ref{theo_appli}, $\mathfrak{B}^{(s)}(z)$, $\mathfrak{A}^{(r)}(z)$, and $\mathfrak{f}^{(k)}(z)$ are algebraically independent over  $E_K$. 

In conclusion, for any integers $r,s, k\geq0$, the power series $\mathfrak{B}^{(s)}(z)$, $\mathfrak{A}^{(r)}(z)$, and $\mathfrak{f}^{(k)}(z)$ are algebraically independent over $E_K(\exp(\pi_3z))$. Since $\mathbb{Q}(z)((\exp(\pi_3z))\subset E_K(\exp(\pi_3z))$, it follows that $\mathfrak{B}^{(s)}(z)$, $\mathfrak{A}^{(r)}(z)$, and $\mathfrak{f}^{(k)}(z)$ are algebraically independent over $\mathbb{Q}(z)(\exp(\pi_3z))$. But, as $\exp(\pi_3z)$ is transcendental over $\mathbb{Q}(z)$, we conclude that the power series $\exp(\pi_3z)$, $\mathfrak{B}^{(s)}(z)$, $\mathfrak{A}^{(r)}(z)$, and $\mathfrak{f}^{(k)}(z)$ are algebraically independent over $\mathbb{Q}(z)$.

\end{proof}

In a similar fashion, we show that, for all integers $r,s, k\geq0$, the power series $\exp(\pi_3z)$, $J_0^{(r)}(\pi_3z)$, $\mathfrak{A}^{(s)}(z)$, and $\mathfrak{h}^{(k)}(z)$ are algebraically independent over $\mathbb{Q}(z)$.

\end{document}